\numberwithin{equation}{section}
\let \beg=\begin
\let \hra=\hookrightarrow
\let \mb=\mathbb
\let \mc= \mathcal
\let \rt=\rightarrow
\let \ra=\rightarrow
\let \st=\stackrel
\let \La=\Lambda
\let \Ga=\Gamma
\let \de=\delta
\let \wt=\widetilde
\let \ga=\gamma
\let \Om=\Omega
\let \fl=\flushleft
\let \fr=\frac
\let \part=\partial
\let \pr=\prime
\let \sub=\subset
\newtheorem{lem}{Lemma}[section]
\newtheorem{thm}{Theorem}[section]
\begin{document}
\title{\large Rational curves of degree $11$ on a general quintic threefold \thanks{$2000$ {\it Mathematics Subject Classification}.
Primary 14J30; Secondary 13P10, 14H45.}}
\author{\small Ethan Cotterill \thanks{Research was conducted in part while the author was partially supported by an NSF graduate fellowship.}}
\vspace{-.5cm}
\date{\empty}
\thispagestyle{empty}
\maketitle
\vspace{-30pt}
\begin{abstract}
We prove that the incidence scheme of rational curves of degree 11 in quintic threefolds is irreducible. Irreducibility implies a strong form of the Clemens conjecture in degree
$11$. Namely, on a general quintic $F$ in $\mathbb{P}^4$,
there are only finitely many smooth rational curves of degree $11$, and
each curve $C$ is embedded in $F$ with normal bundle $\mathcal{O}(-1) \oplus
\mathcal{O}(-1)$. Moreover, in degree $11$, there are no singular,
reduced, and irreducible rational
curves, nor any reduced, reducible, and connected curves with rational
components on $F$.
\end{abstract}

\section* {Introduction}
Twenty-five years ago, H. Clemens conjectured that {\it the number of smooth
rational curves of fixed degree on a general quintic threefold in
$\mathbb{P}^4$ is finite}. Clemens' conjecture has attracted substantial interest in connection with mirror symmetry for the quintic. It has been verified for rational curves of degree at most 10 \cite{K, Klei, Cot}. In each case, the authors have proved a stronger statement; namely, that the incidence scheme
\vspace{-.25cm}
\begin{equation}
\Phi_d:=\{
(C,F)\mid\text{$F$ a quintic containing $C$}\}
\notag 
\vspace{-.25cm}
\end{equation}
is irreducible.

Proving that $\Phi_d$ is irreducible, in turn, is a two-step process, which involves:
\beg{enumerate}
\item[(i)] Establishing uniform bounds on the dimensions of the fibers of the projection of $\Phi_d$ to a suitable parameter space $M_d$ of degree-$d$ rational curves in $\mb{P}^4$.
\item[(ii)] Establishing that the locus inside $M_d$ comprising curves with large-dimensional fibers is of large codimension.
\end{enumerate}

In the present paper, we successfully resolve items (i) and (ii) when $d=11$. More precisely, we prove:
\begin{thm}\label{1.1}
The incidence
scheme $\Phi_d$ of smooth rational curves of degree $d$ at most $11$ on quintic
hypersurfaces $F \subset \mathbb{P}^4$ is irreducible. Moreover, any smooth curve $C$ lying on
a general quintic $F$ is embedded with normal bundle $\mathcal{O}_C(-1) \oplus
\mathcal{O}_C(-1)$. Furthermore, there are no rational and
singular, reduced, and irreducible curves of degree at most $11$ lying on
a general quintic in $\mathbb{P}^4$, other than
the six-nodal plane quintics.
\end{thm}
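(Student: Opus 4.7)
For $d \le 10$ the statement is established in \cite{K, Klei, Cot}, so the new content is the case $d = 11$. The parameter space $M_{11}$ of smooth rational degree-$11$ curves in $\mathbb{P}^4$ has dimension $5 \cdot 11 + 1 = 56$ (the $(5d+4)$-dimensional space of degree-$11$ morphisms $\mathbb{P}^1 \to \mathbb{P}^4$ modulo $\mathrm{PGL}_2$), and since $h^0(\mathcal{O}_C(5)) = 56$ the expected projective fiber of $\Phi_{11} \to M_{11}$ is $\mathbb{P}^{69}$, giving expected total dimension $125 = \dim |5H|$. Combined with generic freeness of $\Phi_{11} \to |5H|$, the codimension estimate
\begin{equation*}
\mathrm{codim}_{M_{11}} \{C \in M_{11} : h^1(\mathcal{I}_C(5)) \ge k\} \ge k \quad (k \ge 1)
\end{equation*}
is what I would aim for: it forces $\Phi_{11}$ to be irreducible of dimension $125$, dominating $|5H|$ with generically $0$-dimensional fibers. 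The normal bundle statement $\mathcal{O}_C(-1) \oplus \mathcal{O}_C(-1)$ for smooth $C$ on general $F$ then follows from $h^0(N_{C/F}) = 0$ combined with $\deg N_{C/F} = -2$.

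\textbf{Steps (i)--(ii).} To bound $h^1(\mathcal{I}_C(5))$ uniformly I would cut $C$ by a general hyperplane $H \cong \mathbb{P}^3$ and use the restriction sequence
\begin{equation*}
0 \to \mathcal{I}_C(4) \to \mathcal{I}_C(5) \to \mathcal{I}_{\Gamma/H}(5) \to 0
\end{equation*}
for the $11$-point scheme $\Gamma = C \cap H$, together with analogous sequences at lower twists. An inductive bootstrap on the twist --- initialized by the known lower-degree cases --- reduces the vanishing problem to a statement about the Hilbert function of $\Gamma$ in $\mathbb{P}^3$; for $\Gamma$ in sufficiently general position, eleven points impose independent conditions on quintics and $h^1(\mathcal{I}_{\Gamma/H}(5)) = 0$. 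To control the bad locus, I would stratify $M_{11}$ by the Hilbert function (equivalently, the minimal free resolution) of $\Gamma$, compute the codimension of each stratum inside $\mathrm{Hilb}^{11}(\mathbb{P}^3)$ by classical projective geometry, and lift these estimates via a parameter count on the curves realizing a given $\Gamma$ as hyperplane section. The resolutions arising in the worst strata can be verified with Macaulay2.

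\textbf{Singular and reducible rational curves.} For reduced, irreducible, rational $C$ of degree $d \le 11$ with $\delta \ge 1$ effective double points, the dimension count
\begin{equation*}
\dim V_{d,\delta} + \dim |\mathcal{I}_C(5)| = (5d + 1 - 2\delta) + (124 - 5d + \delta) = 125 - \delta
\end{equation*}
puts such a family strictly below $|5H|$ unless $C$ is confined to a proper projective subspace, contributing an extra Grassmannian factor. A direct tally over the possible cases singles out the six-nodal plane quintic: $\mathrm{Gr}(3, 5)$ contributes dim $6$, plane quintics with $6$ nodes form a $14$-dimensional family, and the fiber $\mathbb{P}^{105}$ of quintics with prescribed restriction to the plane yields $6 + 14 + 105 = 125$. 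For reducible connected rational curves $C = \bigcup_i C_i$ with $\sum_i \deg C_i \le 11$, I would iterate over partitions, apply the irreducibility of $\Phi_{d_i}$ for each $d_i \le 10$, and bound the dimension of allowable intersection configurations; each partition yields a total strictly below $125$.

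\textbf{Main obstacle.} The hardest piece is the codimension estimate in (ii) for $C$ lying on a special surface in $\mathbb{P}^4$ --- a rational normal scroll, the Veronese surface, a cubic scroll, or a Del Pezzo surface --- where $h^1(\mathcal{I}_C(5))$ jumps beyond what the generic hyperplane-section analysis detects. Each class of special ambient surface requires a dedicated dimension count, comparing the divisor class of $C$ on the surface against the moduli of the surface in $\mathbb{P}^4$, and assembling these into a uniform bound $\ge k$ is the main technical burden of the argument.
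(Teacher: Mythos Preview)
Your outline has the right shape --- stratify by the hyperplane section, bound $h^1(\mathcal{I}_C(5))$, and compare against codimension --- but two substantive pieces are missing.

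First, your treatment of singular irreducible rational curves is circular. The count $(5d+1-2\delta)+(124-5d+\delta)=125-\delta$ inserts $\dim|\mathcal{I}_C(5)|=124-5d+\delta$, which presupposes $h^1(\mathcal{I}_C(5))=0$; that vanishing is exactly what is at stake. The paper does not separate ``smooth'' from ``singular'' in this way. It works throughout with the locus $M^{r,g}_{11,i}$ of maps whose images have arithmetic genus $g$ and $i=h^1(\mathcal{I}_C(5))$, and the target inequality is $\operatorname{cod}(M^{r,g}_{11,i},M_{11})>g+i$ for $i>0$, not merely $\geq i$. The genus enters because the fiber dimension of $\Phi_{11}\to M_{11}$ jumps by $g+i$, not $i$, over a curve of arithmetic genus $g$. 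To make this work one needs an independent estimate $\operatorname{cod}(M^{4,g}_{11})\geq\min(2g,12)$ coming from the deformation theory of the singularities, and then the residual problem is precisely to handle those curves with $g+i\geq\min(2g,12)$.

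Second, you have misidentified the main obstacle. Curves on scrolls, Veronese surfaces, and low-degree del Pezzos are indeed special, but their dimension counts are comparatively routine. What is genuinely new for $d=11$ --- and absent from your sketch --- is the analysis of curves $C$ with $h^0(\mathcal{I}_C(3))\geq 5$ that do \emph{not} lie on a surface component of the base locus. Three of the hypercubics then cut out a complete intersection curve $X$ of type $(3,3,3)$ containing $C$, and one must study the residual $C'=\overline{X\setminus C}$ via liaison: Hartshorne--Schenzel duality gives $h^1(\mathcal{I}_C(t))=h^1(\mathcal{I}_{C'}(4-t))$, forcing $C'$ to contain a nonreduced subscheme $Y$; Cayley--Bacharach applied to the hyperplane sections then bounds $\deg Y$ and, ultimately, $i$. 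This liaison argument, together with the combinatorics of generic initial ideals (not merely Hilbert functions --- the gin records the Borel-fixed monomial structure and supports the ``rewriting rule'' bookkeeping that yields sharp bounds on $g+i$), is the technical core of the degree-$11$ case and does not emerge from a hyperplane-section bootstrap alone.
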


In \cite{Cot}, we proved the analogous result for rational curves of degree at most 10. To do so,
we introduced a new technique, based on a combinatorial analysis of the {\it generic initial ideals} which result from degenerations of rational curves,
to address the dimension-bounding issue (i) above. The generic initial ideal, or gin, is useful because it has a simple combinatorial structure; in particular, its cohomology is easily computable by hand.

In the current work we extend the methods of \cite{Cot} to handle
rational curves of degree $11$. In degrees at most 10, the resolution to item (ii) above follows easily from the algorithm we give for computing the cohomological invariants of a given gin. For degrees $d \geq 11$, item (ii) is significantly more delicate, because of certain monomial ideals corresponding
to curves lying on a large number of linearly independent hypersurfaces of low degree. 

{\fl \bf NB:} Johnsen and Kleiman \cite{Klei2} showed that $\Phi_d$ is reducible whenever $d \geq 12$, because of extraneous components arising from rational curves that lie on surfaces of low degree. Such components do not dominate the 125-dimensional projective space of quintic threefolds, however, so Clemens' conjecture is no less likely to be true.

\beg{thm}\label{coh} (Classification and cohomology of ideal sheaves; see Theorems~\ref{classif_hyp_gins} and \ref{coh_bounds} below.)
Let $\Ga$ denote a generic hyperplane section of a nondegenerate rational curve of degree 11 in $\mb{P}^4$. There are ten possibilities for $\mbox{gin}(\mc{I}_{\Ga})$. Let $i:= h^1(\mc{I}_C(5))$ and let $\mbox{reg}(\mc{I}_C)$ denote the regularity of $\mc{I}_C$. We have
\[
g+i \leq f_{\Ga,\mbox{reg}(\mc{I}_C)}
\]
where $8 \leq f_{\Ga,\mbox{reg}(\mc{I}_C)} \leq 13$ is an explicitly computable integer.
\end{thm}

By a standard Riemann--Roch argument, establishing upper bounds on the dimensions $h^0(\mc{I}_C(5))$ of fibers of the incidence correspondence $\Phi_{11}$ is equivalent to  establishing upper bounds for $g+i$. More precisely, letting $r$ denote the dimension of the linear span of $C$, it suffices to show that curves with $g+i >0$ determine a sublocus $M^{r,g}_{11,i} \sub M_{11}$ of codimension at least $g+i$, which implies that $\Phi_{11}$ is the closure of the subspace of curves with $i=0$. Since the latter subspace is a fiber bundle over an over open subspace of $M_{11}$, it then follows that $\Phi_{11}$ is irreducible.

In order to estimate the codimension of $M^{r,g}_{11,i}$, we further distinguish between rational curves $C$ with low and high arithmetic genus. When $g(C)$ is small and $i$ is large, $C$ will necessarily lie on either a hyperquadric or a hypercubic.
Curves on hyperquadrics have very special geometry; the interesting cases correspond to curves that lie on many linearly independent hypercubics. We show how to obtain a codimension estimate for such curves by applying a liaison-theoretic analysis to their inclusions (and residuals) in complete intersections of three suitably chosen hypercubics. Our liaison analysis also relies extensively on the theory for gins we develop in \cite{Cot}.

\begin{thm}\label{1.2} (Curves with large cohomology, and low arithmetic genus.) Let $C \sub \mb{P}^4$ denote a rational curve of degree 11 and arithmetic genus $g$, and let $i:=h^1(\mc{I}_C(5))$. Assume $g+i > 2g$, and that $C$ is neither linearly degenerate, nor a curve contained on a hyperquadric. Then $C$ lies on a surface of degree $m \leq 8$, contained in the complete intersection of three hypercubics. Moreover, letting $M^{4,g}_{11}(m)$ denote the corresponding subspace inside $M_{11}$, we have 
\[
g+i < \mbox{cod}(M^{4,g}_{11}(m), M_{11})
\]
for every integer $m \leq 8$.
\end{thm}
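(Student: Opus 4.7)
The plan is to proceed in three stages: first produce many cubics through $C$; then use those cubics, via a liaison analysis, to extract a low-degree surface containing $C$; finally, estimate the codimension $\mbox{cod}(M^{4,g}_{11}(m), M_{11})$ case by case in $m$ and verify the claimed inequality.

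\textbf{Step 1 (many cubics through $C$).} I would invoke Theorem~\ref{coh} to read off $h^0(\mc{I}_C(3))$ from the possible gins of a generic hyperplane section $\Ga$. Since $C$ is nondegenerate and avoids all hyperquadrics, the cohomology of $0 \to \mc{I}_C \to \mc{O}_{\mb{P}^4} \to \mc{O}_C \to 0$ twisted by $\mc{O}(3)$ gives
\[
h^0(\mc{I}_C(3)) = 1 + g + h^1(\mc{I}_C(3)).
\]
Combined with the bound $g+i \leq 13$ and the hypothesis $i > g$ (equivalent to $g+i > 2g$), each of the ten gin cases of Theorem~\ref{coh} should force $h^0(\mc{I}_C(3)) \geq 3$.

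\textbf{Step 2 (liaison and the surface $S$).} With three independent cubics $F_1, F_2, F_3 \in H^0(\mc{I}_C(3))$ in hand, I would analyze their common zero scheme. If $F_1 \cap F_2 \cap F_3$ has a surface component $S$ containing $C$, then $S$ tautologically lies in the common vanishing locus of the three cubics, and a Castelnuovo-type bound on surfaces lying on three cubics in $\mb{P}^4$ yields $\deg(S) \leq 8$. Otherwise, $F_1, F_2, F_3$ meet properly in a complete-intersection curve of degree $27$, and $C$ is liaison-linked to a residual curve $D$ of degree $16$; using the mapping-cone resolution of $\mc{I}_D$ inherited from the gin of $\mc{I}_C$ via the techniques of \cite{Cot}, one shows that $D$ lies on additional cubics, and iterates to extract the required surface $S$ through $C$.

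\textbf{Step 3 (codimension count).} For each integer $m$ with $3 \leq m \leq 8$ (the cases $m \leq 2$ being excluded by nondegeneracy and avoidance of hyperquadrics), I would bound
\[
\dim M^{4,g}_{11}(m) \leq \dim \mc{H}_m + \dim |C|_{S,g},
\]
where $\mc{H}_m$ is the Hilbert scheme component of degree-$m$ surfaces in $\mb{P}^4$ lying on three cubics, and $|C|_{S,g}$ parametrizes rational curves of degree $11$ and arithmetic genus $g$ on a general such surface. The classification of small-degree surfaces in $\mb{P}^4$ (cubic and quartic scrolls, Veronese, del Pezzo, Bordiga, Castelnuovo surfaces, and the like) is classical, and Picard-group computations on each type, combined with the genus constraint on $C$, produce the required dimension. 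Subtracting from $\dim M_{11} = 56$ then gives the codimension.

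\textbf{Main obstacle.} The hardest cases will likely be the intermediate values $m = 4, 5, 6$, for which both the surface moduli and the linear systems of degree-$11$ rational curves contribute nontrivially. A particular subtlety is that certain surfaces (notably scrolls) carry several linear systems yielding degree-$11$ rational curves, each giving a distinct subfamily that must individually satisfy $g + i < \mbox{cod}$. Verifying this inequality uniformly across the admissible range of $(g,i)$ coming from $g + i > 2g$ and $g + i \leq 13$, and reconciling the Castelnuovo bound $m \leq 8$ with the gin-theoretic input of Step 1, is the heart of the calculation.
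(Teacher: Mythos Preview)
Your three-stage outline captures the broad shape of the argument, but there are two genuine gaps.

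\textbf{Step 1 undershoots.} Three cubics are not enough: the paper's liaison argument in the complete-intersection case hinges on having at least \emph{five} independent hypercubics through $C$ (so that Bertini forces the nonreduced part of the residual into the base locus, and further hypercubics are available to bound that base locus by B\'ezout). Your Riemann--Roch identity $h^0(\mc{I}_C(3)) = 1 + g + h^1(\mc{I}_C(3))$ does not by itself control $h^1(\mc{I}_C(3))$ from the hypothesis $i = h^1(\mc{I}_C(5)) > g$. What the paper actually does (Lemma~\ref{lem 2.3}) is a much finer gin-by-gin case analysis combined with the RTB stratification and \cite[Prop.~1.2]{GLP}: one shows that every $C$ with $g+i > 2g$ is either already nonproblematic or satisfies $g \leq 2$ \emph{and} $h^0(\mc{I}_C(3)) \geq 5$. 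Both conclusions are needed downstream.

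\textbf{Step 2 takes a route that does not close.} When $X_1 \cap X_2 \cap X_3$ is a genuine curve, the paper does \emph{not} iterate to extract a surface through $C$; indeed no such surface is produced in that branch. Instead, one applies Hartshorne--Schenzel duality: since $h^1(\mc{I}_C(5)) \neq 0$, the residual $C'$ satisfies $h^1(\mc{I}_{C'}(-1)) \neq 0$, hence $C'$ has a nonreduced subscheme $Y$. Bertini then forces $Y \subset \mbox{Bs}(\mc{L})$, the reduced residual $Y'$ gives $h^1(\mc{I}_{C \cup Y}(5)) = 0$ (so $C \cup Y$ is 6-regular), and Cayley--Bacharach on hyperplane sections bounds $\deg Y \leq 4$ and sharpens the estimate to $i \leq 2$ (Claims~\ref{key inference}--\ref{improved i-estimate}). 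This, together with $g \leq 2$, is what yields the codimension inequality. Your ``iterate on the residual to find more cubics and eventually a surface'' has no evident termination and does not exploit the crucial nonreducedness of $C'$.

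Your Step 3 is closer to the paper for the surface branch (Lemma~\ref{surface case}), though for $m \geq 5$ the paper computes the sectional genus of $S$ via liaison against the residual surface $S'$ in $X_1 \cap X_2$ and then bounds $h^0(\mc{I}_S(3))$, rather than directly parametrizing a Hilbert-scheme component $\mc{H}_m$.
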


When the arithmetic genus $g$ is large, $i$ is small; correspondingly, the argument applied in the preceding case does not apply. However, the corresponding curves will be highly singular, which enables us to enables us to effectively bound their codimension.

\beg{thm} (Curves with large cohomology, and high arithmetic genus.) Let $M^{4,g}_{11}$ denote the subspace of $M_{11}$ comprising parameterized maps whose images are nondegenerate and of arithmetic genus $g$. We have
\[
\mbox{cod}(M^{4,g}_{11}, M_{11}) \geq \min(2g,12).
\]
\end{thm}

To complete the proof of Theorem~\ref{1.1}, it remains to obtain bounds on the dimensions of spaces of linearly degenerate and reducible curves, respectively. Here our arguments follow closely those of \cite[Thms 2.3, 3.1]{Cot}.

\beg{thm} (Linearly degenerate curves.)
The reduced irreducible rational curves that verify
\[
g+i \geq 8+ \min(g,5),
\]
and whose images span hyperplanes, comprise a sublocus of $M_{11}$ of codimension greater than $g+i$.
\end{thm}

\beg{thm} (Reducible curves.) On a general quintic threefold in $\mb{P}^4$, there is no connected, reducible and reduced curve of degree at most 11 whose components are rational.
\end{thm}

Note that in both \cite{Cot} and this paper, while gins allow us to bound the ideal
sheaf cohomology of curves adequately for our purposes, the estimates we obtain could be significantly 
improved by understanding which Borel-fixed ideals {\it actually arise} as gins of irreducible curves. An appropriate generalization of 
the ``connectedness" theorems of \cite{GP} and \cite{AT} for (ideal sheaves of) points in 
$\mathbb{P}^{2}$ and irreducible curves in $\mathbb{P}^{3}$, respectively, would likely place much more stringent
restrictions on the class of Borel-fixed ideals arising as gins than those provided by our analysis. 

Another potentially fruitful strategy is to prove a {\it tropical geometric} analogue of Clemens' conjecture. This is the ultimate goal of an ongoing project of the author, in collaboration with T. Bogart and E. Brugall\'e \cite{BBC}. Note that G. Hana and T. Johnsen \cite{HJ} have applied our method to prove that no rational curves of degree $2 \leq d \leq 15$ exist on a general degree-7 hypersurface $F \sub \mb{P}^5$; conjecturally, no rational curves of degree $d \geq 2$ exist on $F$.

Finally, a word on terminology. In our context, a {\it general} hypersurface $F$ is one that belongs to the complement of a countable union of proper Zariski-closed subschemes of the projective space of hypersurfaces. Elsewhere in the literature, such a hypersurface might be called ``very general".
 
\vspace{-10pt}
\section*{Acknowledgements.} I am grateful to I. Coskun, J. Harris, A. Iarrobino, M. Roth, and D. Smyth, for valuable conversations. I am especially grateful to S. Kleiman,
who has offered continual input, encouragement, and patience.

\tableofcontents

\section{Generic initial ideals}\label{sec-one}

\subsection{First properties}\label{firstproperties}

Given any homogeneous ideal $I$ in $(n+1)$ variables $x_i, 0 \leq i \leq n$,
together with a
choice of
partial order $<$ on the monomials $m(x_i)$, the {\it initial ideal} of $I$ with respect to $<$ 
is, by definition, generated by leading terms of elements in $I$. For any subscheme $C \sub \mathbb{P}^n$, upper-semicontinuity of cohomology implies that the regularity and values $h^j$
of the ideal sheaf $\mathcal{I}_C$ are majorized by those of the initial ideal sheaf
$\mbox{in}(\mathcal{I}_C)$ with respect to any partial order on the
monomials of $\mathbb{P}^n$. On the other hand, Macaulay's theorem establishes that the Hilbert
functions of $\mathcal{I}_C$ and $\mbox{in}(\mathcal{I}_C)$ agree. Moreover, if
we replace $C$ by
a {\it general} $\mbox{PGL}(n+1)$-translate of $C$, then
\begin{equation*}
\mbox{reg}(\mathcal{I}_C) = \mbox{reg}(\mbox{in}(\mathcal{I}_C)),
\end{equation*}
and $\mbox{in}(\mathcal{I}_C)$ is called the {\it generic initial
  ideal}
of $C$, written $\mbox{gin}(\mathcal{I}_C$). 

There is a unique generic initial ideal associated to any
  ideal. Further, the following three statements are equivalent.
\begin{enumerate}
\item The monomial ideal $\mbox{gin }\mathcal{I}_C$ is saturated, in
  the ideal-theoretic sense. In other words, for all $m \in (x_0,\dots,x_n)$,
\vspace{-.2cm}
\begin{equation*}
g \cdot m \in  \mbox{gin
  }\mathcal{I}_C \Rightarrow g \in \mbox{gin
  }\mathcal{I}_C.
\vspace{-.25cm}
\end{equation*}
\item The homogeneous ideal $I_C$
  is saturated.
\item No minimal generator of $\mbox{gin
  }\mathcal{I}_C \subset \mathbb{C}[x_0,\dots,x_n]$ is divisible by $x_n$.
\end{enumerate}
For proofs, see \cite[Thm.~2.30]{Gr}.

{\fl Now} let $I \subset \mathbb{C}[x_0,\dots,x_n]$ be any ideal that
is {\it Borel-fixed}, i.e., fixed under the action of upper triangular
matrices $\mathcal{T} \subset \mbox{PGL}(n+1)$; then $I$ is
saturated if and only if none of its minimal generators is divisible by $x_n$.
Hereafter, we work exclusively
with {\it saturated} generic initial ideals. 

A crucial fact is that {\it the Castelnuovo-Mumford regularity of any saturated ideal sheaf $\mc{I}$ equals that of its associated generic initial ideal sheaf, which in turn equals the maximum degree of any minimal generator of $\mbox{gin}(I)$.} This fact will be used in subsequent sections without comment.

\subsection{Hyperplane sections}\label{hypsections}
Our basic procedure for obtaining upper bounds on $h^1(\mc{I}_C(5))$, as developed in detail in \cite[Sect. $1$]{Cot}, is in two steps.

\beg{itemize}
\item[(1)] Determine possible monomial ideals arising as gins associated to general hyperplane sections of rational curves $C$.
\item[(2)] Determine possible curve gins corresponding to each hyperplane section gin, and compute the corresponding values of $h^1(\mc{I}_C(5))$.
\end{itemize}

{\fl In} this section, we prove the following result, which resolves item (1) above. 

\beg{thm}\label{classif_hyp_gins}
Let $I$ denote the gin of a general hyperplane section of an irreducible, nondegenerate rational curve of degree 11 in $\mb{P}^4$. There are ten possibilities for $I$.
\beg{enumerate}
\item $I =\mbox{Borel}(x_2^4,x_1x_2^2)$
\item $I= \mbox{Borel}(x_2^4,x_1^2x_2,x_0^2)$
\item $I= \mbox{Borel}(x_2^5,x_1x_2^2,x_0^2)$
\item $I= \mbox{Borel}(x_2^4,x_1^3,x_0x_2^2,x_0x_1)$
\item $I=\mbox{Borel}(x_2^4,x_1^2x_2,x_0x_2^3,x_0x_1)$
\item $I=\mbox{Borel}(x_2^5,x_1x_2^3,x_1^2x_2,x_0x_2^2,x_0x_1)$
\item $I=\mbox{Borel}(x_2^5,x_1x_2^2,x_0x_2^3,x_0x_1)$
\item $I = \mbox{Borel}(x_2^4,x_0x_2)$
\item $I = \mbox{Borel}(x_2^5,x_1x_2^3,x_1^3,x_0x_2)$
\item $I=\mbox{Borel}(x_2^5,x_1^2x_2,x_0x_2)$ 
\end{enumerate}
\end{thm}

{\fl Here} $\mbox{Borel}(J)$ denotes the smallest
Borel-fixed ideal containing an ideal $J$.

{\fl Before} giving the proof, we recall the basic properties of hyperplane gins that we shall need.
 
{\fl Each} hyperplane gin is minimally generated by monomials in $x_0,x_1,x_2$ of a set of homogeneous variables $x_i, 0 \leq i \leq 4$ for $\mb{P}^4$, and the corresponding set of minimal generators that may be represented by a tree (i.e., a directed graph without cycles), as follows. 

{\fl Fix} an alphabet
$\mathcal{A}:=\{\emptyset,x_0,x_1,x_2\}$, and consider the set of all
trees with root vertices $\emptyset$ and all other vertices labeled by either
$x_0,x_1,\text{ or }x_2$. Call
terminal vertices {\it leaves}. Then the unique path from the root vertex
$\emptyset$ to any leaf labeled $x_{i_l}$ determines a sequence
of vertices $\emptyset, x_{i_1},x_{i_2},\dots,x_{i_l}$.  (We will only be concerned
with trees associated to nonzero ideals, so $\emptyset$ will never
arise as a leaf.) It's natural to
interpret the string $x_{i_1}x_{i_2} \dotsm x_{i_l}$ as a polynomial of degree
$l$. Likewise, if the unique path from the root vertex to a given
vertex $v$ involves $d$ edges, then we say that {\it $v$ has degree
  $d$}. A {\it rewriting rule} applied to a leaf $v$ is then a formal operation on $T$ that
at $v$
glues a certain number of new edges $e_i(v)$ terminating in vertices $v_i(v)$. The result is a new tree $T^{\prime}$
in which the vertices $v_i(v)$ are leaves of degree $d+1$. Say that a vertex $V_1$ {\it dominates} a vertex $V_2$ if it
is closer to the root vertex $\emptyset$. We also stipulate that, if
$V_i$ with label $x_i$ dominates $V_j$ with label $x_j$, then $i \leq
j$. 

{\fl For} each homogeneous monomial ideal $I$, the unique minimal generating
set of $I$
determines a tree $T(I)$ whose leaves correspond to minimal generators
of $I$. The assignment $I \mapsto T(I)$, moreover,
is unique. Accordingly, we treat rewriting rules interchangeably as
operations on either ideals or trees.

{\fl In} \cite[Lemmas 1.2.1, 1.2.2]{Cot}, we showed: 
\beg{fact}\label{F1.1}
The minimal generating set of every nondegenerate hyperplane gin
may be realized by applying the rewriting rules given in Table ~\ref{Table $1$}. These we call $\Lambda$-rules. Graphically speaking, applying the $\Lambda$-rule $X_k \mapsto Y_{k+1}$ alters a tree of minimal generators
for $I_k$ by gluing $\mbox{Card}(Y_{k+1})$ new edges onto the leaf
corresponding to $X_k$.
\end{fact} 
\begin{table}[section]
\caption{$\Lambda$-rules for nondegenerate curves in $\mathbb{P}^4$}\label{Table $1$}
\vspace{-0.3cm}
$$\begin{array}{ll}
\hline
1.\rule{0 pt}{13 pt}&x_0^e
\mapsto (x_0^{e+1},x_0^ex_1,x_0^ex_2),\\
\vspace{.1cm}
2.&x_0^ex_1^f \mapsto (x_0^ex_1^{f+1},x_0^ex_1^fx_2), \\
\vspace{.1cm}
3.&x_0^ex_1^fx_2^g \mapsto
x_0^ex_1^fx_2^{g+1}, \text{and an {\it initial rule}}\\
4.&\emptyset \mapsto
(x_0,x_1,x_2).
\vspace{-.3cm} 
\end{array}$$
\end{table} 

\beg{fact}\label{F1.2}
The number of nonleaf vertices in the tree defining a zero-dimensional subscheme of $\mb{P}^3$ is equal to the scheme's degree.
\end{fact}

\beg{fact}\label{1.3}
The ideal sheaf of a set of $d$ points in uniform position in $\mb{P}^n$ is $(\lceil \fr{d-1}{n} \rceil+1)$-regular \cite{Ba}.
\end{fact}

{\fl The} upshot of the preceding discussion is that any ideal $I$ defining the gin of some general hyperplane section of a nondegenerate, irreducible rational curve $C$ is subject to significant numerical restrictions.

\beg{proof}[Proof of Theorem~\ref{classif_hyp_gins}]
Fact~\ref{1.3} above establishes that $I$ is 5-regular, so minimally generated by monomials $p(x_0,x_1,x_2)$ of degree at most $\leq 5$. Moreover, $T(I)$ has exactly 11 nonleaf vertices. As $I$ is Borel-fixed, it follows that the minimal generating set of $I$ contains a monomial of degree at least 4. 

Finally, $I$ cannot have more than $3$ quadratic generators. For, say $\Lambda \subset Q_1 \cap Q_2 \cap Q_3$, where the $Q_i$ are linearly independent quadrics in $\mathbb{P}^3$. By Bezout's theorem, $Q_1 \cap Q_2 \cap Q_3$ is not a complete intersection, because it contains a zero-dimensional subscheme of degree $11$. Thus $Q_1 \cap Q_2 \cap Q_3$ is a twisted cubic (or degeneration thereof), whose intersection with any fourth algebraically independent quadric containing $\Lambda$ is a zero-dimensional scheme of length at most 6, which is too small. 

The desired result now follows from an easy inspection. For example, the unique possibility when $I$ has no quadratic generators is the first ideal listed. Similarly, if $I$ has exactly one quadratic generator, there are two possibilities (the second and third ideals listed), which are distinguished by the fact that one is 4-regular while the other is not. The remaining ideals are obtained similarly.
\end{proof}

\subsection{Nondegenerate curve gins, and $h^1(\mc{I}_C(5))$}\label{curvegins}
In this subsection, we identify possible monomial ideals arising as gins of nondegenerate rational curves $C$ of degree $11$ and arithmetic genus $g$ in $\mb{P}^4$, and using the method of \cite[section $1.4$]{Cot}, we obtain bounds on $g+i$, where $i:=h^1(\mc{I}_C(5))$.
According to Fact~\ref{1.4} below, each $\mbox{gin}(\mc{I}_C)$ is obtained by applying a sequence of rewriting rules to $\mbox{gin}(\mc{I}_{\Ga})$. Accordingly, the mapping space $M_{11}$ is naturally stratified by the set of hyperplane gins.

\beg{thm}\label{coh_bounds} Let $C \sub \mb{P}^4$ be a nondegenerate rational curve of degree 11, with generic hyperplane section $\Ga$. The invariant $g+i$ of $C$ satisfies the following upper bounds.
\beg{enumerate}
\item If $\mbox{gin}(\mc{I}_{\Ga})=\mbox{Borel}(x_2^4,x_1x_2^2)$, then $g+i \leq 8$.
\item If $\mbox{gin}(\mc{I}_{\Ga})=\mbox{Borel}(x_2^4,x_1^2x_2,x_0^2)$, then $g+i \leq 9$.
\item If $\mbox{gin}(\mc{I}_{\Ga})$ is one of the following ideals, then $g+i \leq 10$.
\beg{itemize}
\item $\mbox{Borel}(x_2^5,x_1x_2^2,x_0^2)$
\item $\mbox{Borel}(x_0x_1,x_0x_2^3,x_1^2x_2,x_2^4)$
\item $\mbox{Borel}(x_0x_1,x_0x_2^2,x_1^3,x_2^4)$
\end{itemize}
\item If $\mbox{gin}(\mc{I}_{\Ga})$ is one of the following ideals, then $g+i \leq 11$.
\beg{itemize}
\item $\mbox{Borel}(x_0x_2,x_2^4)$
\item $\mbox{Borel}(x_0x_1,x_0x_2^2,x_1^2x_2,x_1x_2^3,x_2^5)$
\item $\mbox{Borel}(x_0x_1,x_0x_2^3,x_1x_2^2,x_2^5)$
\end{itemize}
\item If $\mbox{gin}(\mc{I}_{\Ga})=\mbox{gin}(\mc{I}_{\Ga})=\mbox{Borel}(x_2^5,x_1^2x_2,x_0x_2)$, then $g+i \leq 12$.
\item If $\mbox{gin}(\mc{I}_{\Ga}= \mbox{Borel}(x_2^5,x_1x_2^3,x_1^3,x_0x_2)$, then $g+i \leq 13$.
\end{enumerate}
\end{thm}


{\fl As} was the case with Theorem~\ref{classif_hyp_gins}, the proof of Theorem~\ref{coh_bounds} is nearly immediate once the basic structure of the ideals in question has been explained. For this purpose, let $x_0,\dots,x_4$ be homogeneous coordinates on $\mb{P}^4$. The saturated generic initial ideal $\mbox{gin}(\mc{I}_C)$ is a Borel-fixed monomial ideal, none of whose minimal generators is divisible by $x_4$. To each minimal generating set of $\mbox{gin}(\mc{I}_C)$ we can associate a tree $T(\mbox{gin}(\mc{I}_C))$. Then, as explained in \cite[section $1.2$]{Cot}, we have:

\beg{fact}\label{1.4}
The tree corresponding to any
generic initial ideal defining a nondegenerate subscheme $X \subset \mathbb{P}^4$ of dimension $1$ is
obtained from the $\mbox{gin}$ of a general hyperplane section of
$X$ by applying a sequence of rewriting rules given in \ref{Table $2$}, that we denote by $C$-rules.
\end{fact}
\beg{fact}\label{1.5}
Given any hyperplane gin $I$, let $I_{C_{\Gamma}}$ be its extension to $\mb{C}[x_0,\dots,x_4]$. The scheme defined by $C_{\Gamma}$ is a cone with vertex $(0,0,0,0,1)$ over the zero-dimensional scheme defined by $I$. Let $m$ be any positive integer such that $\mc{I}_{C_{\Gamma}}$ is $m$-regular. Then 
\[
g(C_{\Gamma})=11m+1-\binom{m+4}{4}+h^0(\mc{I}_{C_{\Gamma}}(m)).
\]
\end{fact}
\beg{fact}\label{1.6}
Let $I \subset \mathbb{C}[x_0,\dots,x_4]$ be a Borel-fixed monomial ideal
  of dimension $1$, and $I^{\prime}$ the ideal obtained from $I$ by applying a
  single $C$-rule. Then $g(I^{\prime})=g(I)-1$, where $g(I)$ denotes the arithmetic genus of the scheme defined by $I$.
\end{fact}
\beg{fact}\label{1.7}
The number of $C$-rewritings applied to yield $\mbox{gin}(\mc{I}_C)$ from a given hyperplane gin $I_{\Gamma}$ equals $g(C_{\Gamma})-g(C)$.
\end{fact}
\beg{fact}\label{1.8}
The number of vertices in the tree representing $\mbox{gin}(\mathcal{I}_C)$ dominating vertices of degree greater than
  5 (equivalently, the number of rewritings applied to vertices of
  degree 6 or greater) equals $h^1(\mc{I}_C(5))$.
\end{fact}
\beg{fact}\label{1.9} For all nondegenerate rational curves $C \sub \mb{P}^4$, we have
\[ g+ h^1(\mc{I}_C(5)) \leq g(C_{\Gamma}) \]
\end{fact}
\beg{fact}\label{1.10}
The number of vertices in the tree representing $\mbox{gin}(\mathcal{I}_{\Ga})$ dominating vertices of degree greater than
  $t$ (equivalently, the number of $\La$-rewritings applied to vertices of
  degree $(t+1)$ or greater) equals $h^1(\mc{I}_{\Ga}(t))$, for all $t \geq 0$.
\end{fact}

\begin{table}
\caption{$C$-rules for nondegenerate irreducible curves}\label{Table $2$}
\vspace{-.3cm}
$$\begin{array}{ll}
\hline
\vspace{0.1cm}
1.&x_0^e
\mapsto x_0^e \cdot(x_0,x_1,x_2,x_3),\\
\vspace{.1cm}
2.&x_0^ex_1^f \mapsto x_0^ex_1^f \cdot(x_1,x_2,x_3), \text{ and }\\
\vspace{.1cm}
3.& x_0^ex_1^fx_2^g \mapsto
x_0^ex_1^fx_2^g \cdot(x_2,x_3) \notag \\
\vspace{.1cm}
4.& x_0^ex_1^fx_2^gx_3^h \mapsto
x_0^ex_1^fx_2^gx_3^{h+1} \notag
\vspace{-.3cm}
\end{array}$$

\vspace{10pt}
Note that the last line of Table 2 was accidentally omitted in the earlier paper \cite{Cot}.
\end{table}
 
 \beg{proof}[Proof of Theorem~\ref{coh_bounds}.]
As noted in Section~\ref{firstproperties}, the regularity of the sheaf determined by a saturated Borel-fixed monomial ideal is equal to the maximum degree of its minimal generators. On the other hand, for any given $\mbox{gin}(\mc{I}_{\Ga})$ of regularity $m$, Fact~\ref{1.5} and Fact~\ref{1.9} together yield an explicit upper bound on $g+i$ that depends on $m$ alone, namely
\beg{equation}\label{g+ibound}
g+i \leq 11m+1 - \binom{m+4}{4}+ h^0(\mc{I}_{C_{\Ga}}(m)).
\end{equation}
Here $i$, as usual, denotes $h^1(\mc{I}_C(5))$, where $C$ is any nondegenerate degree-11 rational curve in $\mb{P}^4$ with generic hyperplane section $\Ga$, while $C_{\Ga}$ is the cone with vertex $(0,0,0,0,1)$ over the zero-dimensional subscheme of $\mb{P}^3$ defined by $\mbox{gin}(\mc{I}_{\Ga})$. Concretely, $h^0(\mc{I}_{C_{\Ga}}(m))$ computes the number of distinct monomials of degree $m$ in $x_0, \dots, x_4$ obtainable as $\mb{C}[x_0,\dots,x_4]$-linear combinations of minimal generators of $\mbox{gin}(\mc{I}_{\Ga})$. Computing the right-hand side of \eqref{g+ibound} for each choice of $\mbox{gin}(\mc{I}_{\Ga})$ yields the desired result.
 \end{proof}

\section{Irreducible curves}\label{sec-two}
In this section, we'll prove Theorem~\ref{1.1}, which extends
results of Johnsen, Kleiman, and the author to include curves of degree at most
11. See \cite[Thm. 3.1]{Klei} and \cite[Thm. 2.1]{Cot}.

\subsection{Initial reductions}
Let $M_{11}^{r,g}$ denote the affine space of parameterized
mappings of $\mathbb{P}^1$ into $\mathbb{P}^4$ that map birationally
onto images of degree $11$, of arithmetic genus $g$, and span an
$r$-dimensional projective space. Note that we do {\it not} mod out
by the $\mbox{PGL}(2)$-action on $\mathbb{P}^1$; nor do we projectivize. Let $\Phi_{11}^{r,g}$ denote the corresponding incidence locus in
$M_{11}^{r,g} \times \mathbb{P}^{125}$. Let $M_{11,i}^{r,g} \subset
M_{11}^{r,g}$ denote the subspace of morphisms with images $C$
satisfying
\begin{equation*}
h^1(\mathcal{I}_C(5))=i, 
\end{equation*}
and let $\Phi_{11,i}^{r,g} \subset \Phi_{11}^{r,g}$ denote
the pullback of $M_{11,i}^{r,g}$ under the canonical projection $\Phi_{11}^{r,g}
\rightarrow \mathbb{P}^{125}$.

As explained in \cite{Klei}, to
prove Theorem~\ref{1.1} it suffices to show that 
\begin{equation}\label{60-g-i}
\mbox{cod}(M_{11,i}^{r,g}, M_{11}) > g+i
\end{equation}
whenever $i$ is nonzero.

In order to simplify the notation, we let $i:=h^1(\mathcal{I}_C(5))$
hereafter.

We may also assume $r \geq 3$, since (as noted in \cite[Section 3]{Klei})
no planar rational curves of degree greater than 5 lie on a general quintic.

We'll need the following analogue of \cite[Lemma 3.4]{Klei}, whose proof we sketch in the final section; for full details see \cite{Cot2}.
\beg{thm}\label{singest}
We have
\begin{subequations}\label{E1.2}
\begin{align}
\dim M_{11}^{4,g} &\leq 60-\min(2g,12), \label{1.2a} \text{ and }\\
\dim M_{11}^{3,g} &\leq 52-\min(g,5)\label{1.2b}.
\end{align}
\end{subequations}
\end{thm}
The second estimate was already proved in \cite{Klei}. The attentive reader will note an apparent discrepancy of 4 between our upper bound for $\dim M_{11}^{3,g}$ and that given in \cite{Klei}; the reason our upper bound is 4 more than theirs is that we allow our spanning hyperplanes $H$ to vary, while Johnsen and Kleiman do not. This bit of notational confusion led to an error in \cite{Cot}, which is corrected in an erratum included at the end of \cite{Cot2}.

To establish the validity of \eqref{60-g-i}, we treat nondegenerate
curves and curves with 3-planar images separately. We first consider nondegenerate
curves in $\mathbb{P}^4$. Applying \eqref{1.2a} of Theorem~\ref{singest}, Theorem~\ref{1.1} for nondegenerate
curves follows from the following result.

\begin{thm}\label{1.1bis}
The nondegenerate reduced irreducible
rational curves verifying
\begin{equation*}
g+i \geq \min(2g,12)
\end{equation*}
define a sublocus of the parameter space $M_{11}$ of degree-$11$
rational maps of codimension greater than
$g+i$.
\end{thm}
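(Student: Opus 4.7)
The plan is to split the nondegenerate locus into the two complementary regimes $g+i > 2g$ and $g+i \leq 2g$, appealing to Theorems~\ref{1.2} and~\ref{singest} respectively, together with the global bound $g+i \leq 13$ supplied by Theorem~\ref{coh_bounds}.

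In the first regime $g+i > 2g$ (equivalently $i > g$), the inequality $g+i \leq 13$ forces $g \leq 6$. Whenever $C$ is not contained in a hyperquadric, Theorem~\ref{1.2} applies directly: $C$ lies on a surface of degree $m \leq 8$ cut out by a complete intersection of three cubics, so that $M^{4,g}_{11,i} \subset \bigcup_{m \leq 8} M^{4,g}_{11}(m)$, and the bound $g+i < \mbox{cod}(M^{4,g}_{11}(m), M_{11})$ is inherited. For curves contained in a hyperquadric $Q \subset \mb{P}^4$, a separate argument is required: I would combine the $14$-dimensional choice of $Q$ with a Hilbert-scheme dimension count for rational curves of degree $11$ on a fixed (possibly singular) quadric threefold, adding a ``$-\min(2g,12)$''-type singularity penalty in the spirit of Theorem~\ref{singest}, and verify that the resulting estimate beats $60-(g+i)$.

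In the second regime $g+i \leq 2g$, the hypothesis $g+i \geq \min(2g,12)$ pins us either to $g+i = 2g$ with $g \leq 6$ (so $i = g$), or to $g+i \in \{12,13\}$ with $g \geq 6$. In both subranges, Theorem~\ref{singest}(\ref{1.2a}) yields $\mbox{cod}(M^{4,g}_{11}, M_{11}) \geq \min(2g,12)$, which exactly saturates $g+i$ at the boundary of the hypothesis. The missing one or two units of codimension must come from the condition $i > 0$ itself. To extract them I would appeal to the gin stratification of Section~\ref{curvegins}: by Fact~\ref{1.8}, the invariant $i$ equals the number of vertices in the tree of $\mbox{gin}(\mc{I}_C)$ dominating a vertex of degree $\geq 6$. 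Because a generic $C \in M^{4,g}_{11}$ has $i = 0$, each additional high-degree vertex cuts out a proper closed sub-locus, and a case-by-case inspection (guided by the classifications in Theorems~\ref{classif_hyp_gins} and~\ref{coh_bounds}) supplies the requisite extra codimension.

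The main obstacle is the codimension bookkeeping in the second regime, particularly the extremal case $g+i = 13$ (attained only for the specific gin appearing in item (6) of Theorem~\ref{coh_bounds}), where the gap between Theorem~\ref{singest} and the target is a full two units and must be closed by refined gin-level estimates. A secondary obstacle is the hyperquadric sub-case of the first regime, excluded from Theorem~\ref{1.2}, for which an independent Hilbert-scheme dimension count on quadric threefolds and quadric cones must be assembled, taking account of how the arithmetic genus of the image interacts with the restricted Picard geometry of $Q$.
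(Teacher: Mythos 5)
Your two-regime split ($i>g$ versus $i\le g$) matches the paper's architecture, but the proposal has genuine gaps where the actual work of Theorem~\ref{1.1bis} is done. The most serious one is in the regime $g+i\le 2g$. There your only quantified tool is Theorem~\ref{singest}, which, as you note, merely saturates the target, and the extra one or two units you hope to extract ``from the condition $i>0$ itself'' via the gin stratification are never produced: knowing that $\{i>0\}$ is a proper closed sublocus of $M^{4,g}_{11}$ gives codimension at least $1$ inside that locus, with no mechanism for adding it to the $\min(2g,12)$ of Theorem~\ref{singest}. The paper's mechanism is different and essential: $i>0$ means $\mc{I}_C$ is $6$-irregular, hence by \cite[Prop.~1.2]{GLP} the splitting of $f^*T_{\mb{P}^4}(-1)$ is not the generic $(3,3,3,2)$, so $C$ lies in an RTB stratum of codimension at least $2$; and the singularity estimate is refined (Theorem~\ref{codlemma}, proved in \cite{Cot2}) so as to hold \emph{within each RTB stratum}, whence the two codimensions add, giving at least $2+\min(2g,12)>g+i$ (this is exactly how the extremal case $g+i=13$ is closed). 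Moreover your regime-2 argument cannot touch curves with $i=0$, which the hypothesis $g+i\ge\min(2g,12)$ admits precisely when $g=12$: for Castelnuovo curves the paper needs the separate analysis at the end of Section~\ref{sec-two}, realizing such curves as divisors of class $4e+7f$ on cubic scrolls $S_{1,2}$ (and ruling out/counting the cone case $S_{0,3}$) and bounding the dimension there. Nothing in your proposal addresses this case.

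In the regime $i>g$, citing Theorem~\ref{1.2} defers rather than supplies the main argument: that theorem is not an independent ingredient but a summary of Lemma~\ref{lem 2.3} together with Lemmas~\ref{surface case} and~\ref{(3,3,3) curve case}, i.e.\ the reduction to $g\le 2$ and $h^0(\mc{I}_C(3))\ge 5$, the dichotomy between a surface component of the base locus and a $(3,3,3)$ complete-intersection curve, and the ensuing scroll, liaison and Cayley--Bacharach analysis with the secant-style dimension counts --- which is the bulk of the paper's proof of Theorem~\ref{1.1bis}. The hyperquadric subcase you correctly flag as excluded from Theorem~\ref{1.2}, but your plan for it (a Hilbert-scheme count on quadric threefolds with a singularity penalty) is left unexecuted; the paper disposes of it with the elementary count of Lemma~\ref{1b} (lying on a fixed hyperquadric imposes $2\cdot 11+1=23$ conditions, less $14$ for varying the quadric), played against the gin bounds of Theorem~\ref{coh_bounds} within the case analysis of Lemma~\ref{lem 2.3}. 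So while the skeleton is right, the boundary cases with $i\le g$ (including $i=0$, $g=12$) and the hyperquadric curves are missing or unsubstantiated, and the core of the first regime is assumed rather than proved.
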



\subsection{Special subloci associated to secants and hyperquadrics}
In this subsection, we obtain lower bounds on the codimensions of
certain special subloci of $M_{11}$. These estimates allow us conclude that certain curves $C$ corresponding to generic initial ideals with large $i$ never lie on a general quintic threefold, and therefore, may be ignored. 

\beg{defn}\label{nonproblematic}
Let $V \sub M_{11}$ denote a locally closed subvariety of rational curves $C$ of degree 11 with with arithmetic genus $g$ and $h^1(\mc{I}_C(5))=i$. Then $V$ is {\bf nonproblematic} if $V$ is of codimension greater than $g+i$.
Similarly, a curve $C \in M_{11}$ is nonproblematic if either
\beg{enumerate}
\item[(1)] $C$ belongs to a nonproblematic subvariety $V \sub M_{11}$, or 
\item[(2)] $g+i<\min(2g,12)$.
\end{enumerate}
\end{defn}

\begin{lem}\label{1a}
Degree-$11$ morphisms determining
nondegenerate degree-$11$ rational curves
with $9$-secant lines (resp., linearly degenerate degree-11 rational curves with 10-secant lines) have codimension at least $12$ (resp., 6) in $M^4_{11}$ (resp., in $M^3_{11}$).
\end{lem}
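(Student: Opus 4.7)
The plan is to run a standard incidence-correspondence dimension count. The calculation rests on two elementary observations: first, the locus of collinear $k$-tuples in $(\mb{P}^n)^k$ has codimension $(n-1)(k-2)$, which one sees by fixing the first two points generically and imposing that each of the remaining $k-2$ lies on the resulting line; second, for $k \leq 11$, evaluation at $k$ distinct points of $\mb{P}^1$ imposes $k$ independent conditions on each coordinate polynomial of degree $11$, so the evaluation map $\mathrm{ev}\colon M^r_{11} \to (\mb{P}^r)^k$ is submersive on its open domain of definition.

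For the nondegenerate case, I would introduce the incidence
\[
J := \bigl\{(f, (p_1, \ldots, p_9)) \in M^4_{11} \times \mathrm{Sym}^9(\mb{P}^1) : \text{the } p_i \text{ are distinct and } f(p_1), \ldots, f(p_9) \text{ are collinear}\bigr\}
\]
and consider its two projections. By the observations above, the fiber of $\pi_2\colon J \to \mathrm{Sym}^9(\mb{P}^1)$ over a generic $9$-tuple has codimension $(4-1)(9-2) = 21$ in the $60$-dimensional $M^4_{11}$, so $\dim J \leq 9 + 39 = 48$. Every $f$ admitting a $9$-secant line that meets the image in nine distinct points lies in $\pi_1(J)$, so this open locus has codimension at least $12$ in $M^4_{11}$. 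The linearly degenerate case is entirely parallel: $M^3_{11}$ has dimension $52$ ($4$ parameters for the spanning hyperplane, $48$ for the map into it), and the analogous incidence over $\mathrm{Sym}^{10}(\mb{P}^1)$ has fiber codimension $(3-1)(10-2) = 16$, yielding $\dim J \leq 10 + 36 = 46$ and codimension at least $6$.

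The one technicality I expect to require care is closing up the above loci, since a $9$- or $10$-secant line may be infinitesimal---meeting the image in a nonreduced scheme---corresponding to coincidences among the $p_i$. I would handle this by stratifying $\mathrm{Sym}^k(\mb{P}^1)$ (for $k=9, 10$) by partition type and repeating the count on each stratum, using that a length-$\ell$ evaluation at a smooth point of $\mb{P}^1$ imposes $\ell$ independent Taylor-coefficient conditions on polynomials of degree $\geq \ell-1$, and that a length-$k$ subscheme of a line in $\mb{P}^r$ has the same expected codimension $(r-1)(k-2)$ in the appropriate Hilbert scheme as a reduced $k$-tuple. Hence the fiber codimension bound survives on each stratum, and the codimensions $12$ and $6$ persist in the closure.
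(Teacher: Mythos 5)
Your argument is correct and is essentially the paper's own: the paper disposes of this lemma by citing the dimension count in the proof of \cite[Lemma 2.2.1]{Cot}, which is exactly the count you carry out --- imposing that a degree-$9$ (resp.\ degree-$10$) divisor on $\mb{P}^1$ map into a line costs $3\cdot 9$ (resp.\ $2\cdot 10$ within the spanned hyperplane) conditions, minus the parameters for the divisor and the line, giving codimension $2d'-6=12$ (resp.\ $d'-4=6$); your collinear-locus-plus-evaluation-map packaging and the stratification of $\mathrm{Sym}^k(\mb{P}^1)$ by multiplicity type reproduce this, including the nonreduced secant schemes. No gap to report.
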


\begin{proof}
This follows from an easy dimension count. In fact, the proof of \cite[Lemma 2.2.1]{Cot} shows, more generally, that whenever $4 \leq d^{\pr} \leq 11$, those elements of $M^4_{11}$ (resp., in $M^3_{11}$) whose images have $d^{\pr}$-secant lines have codimension at least $(2d^{\pr}-6)$ (resp., $(d^{\pr}-4)$).
\end{proof}

\begin{lem}\label{1b}Degree-$11$ morphisms defining rational curves in reduced, irreducible hyperquadrics in $\mb{P}^4$ determine a locus of codimension
  at least 9 in 
$M_{11}$.
\end{lem}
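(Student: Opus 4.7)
The plan is to bound the dimension of the incidence variety
\[
I := \{(f, Q) \in M_{11} \times \mb{P}^{14} : \mbox{im}(f) \sub Q\},
\]
where $Q$ is restricted to the open locus of reduced, irreducible quadric hypersurfaces, and then show that the first-factor projection has image of codimension at least $9$ in $M_{11}$. Since $\dim M_{11} = 60$, it suffices to prove $\dim I \leq 51$. I would stratify by the rank of $Q$: a reduced, irreducible hyperquadric $Q \sub \mb{P}^4$ has rank $r \in \{3,4,5\}$, with corresponding strata $\mc{Q}_r \sub \mb{P}^{14}$ of dimensions $11$, $13$, and $14$ respectively. The problem then reduces to bounding the dimension of the fibers $M_{11}(Q)$ of $I \rt \mb{P}^{14}$ over each stratum.

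The smooth (rank-$5$) case I would handle via convexity. A smooth quadric threefold is the homogeneous variety $Q \cong SO(5)/P$, so $f^*T_Q$ is globally generated on $\mb{P}^1$ for any morphism $f: \mb{P}^1 \rt Q$, and hence $H^1(\mb{P}^1, f^*T_Q) = 0$. Riemann--Roch then gives
\[
\dim_{[f]} \mbox{Mor}_{11}(\mb{P}^1, Q) = \chi(f^*T_Q) = \dim Q + (-K_Q) \cdot f_*[\mb{P}^1] = 3 + 3 \cdot 11 = 36.
\]
Since $M_{11}$ is not projectivized, adding one for the scaling of the defining polynomials gives $\dim M_{11}(Q) = 37$, and the rank-$5$ contribution to $\dim I$ is $14 + 37 = 51$.

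For $r = 4$, $Q$ is a cone with point vertex over a smooth quadric surface $Q' \cong \mb{P}^1 \times \mb{P}^1 \sub \mb{P}^3$; for $r = 3$, $Q$ is a cone with line vertex over a smooth conic $C_2 \sub \mb{P}^2$. In each case I would choose coordinates so the vertex direction lies in the last $5 - r$ variables, and decompose a morphism $f = (f_0, \ldots, f_4)$ by factoring the gcd $h$ out of the first $r$ coordinates; the resulting coprime tuple then parametrizes a morphism into $Q'$ (resp.\ $C_2$) of some residual degree $d'$, while the remaining $5 - r$ binary forms contribute $12(5-r)$ parameters freely. Using the Segre parametrization of $Q'$ (resp.\ the degree-$2$ Veronese parametrization of $C_2$), and after accounting for the $\mb{C}^*$-ambiguity $(h, (g_i)) \sim (\lambda h, \lambda^{-1}(g_i))$ inherent in the factorization, a direct count yields $\dim M_{11}(Q) \leq 37$ in both cases. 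The rank-$4$ and rank-$3$ contributions are then at most $13 + 37 = 50$ and $11 + 37 = 48$, both less than $51$.

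Combining the three strata gives $\dim I \leq 51$ and hence $\mbox{cod}(\pi_1(I), M_{11}) \geq 9$. The main technical obstacle is the singular case: one must set up the cone decomposition cleanly, verify that the Segre and Veronese parametrizations give the stated dimension counts uniformly in the residual degree $d'$, and track the scaling ambiguities precisely so that the fiber bound does not exceed $37$. The smooth case, by contrast, is immediate from the convexity of $SO(5)/P$.
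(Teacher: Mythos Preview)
Your approach is correct and reaches the same bound, but by a genuinely different route than the paper. The paper's proof is a two-line condition count: for a \emph{fixed} hyperquadric $Q$, the equation $Q(f_0,\dots,f_4)=0$ is a degree-$22$ form in $(t,u)$, so its vanishing imposes $2\cdot 11+1=23$ conditions on the tuple $(f_0,\dots,f_4)\in M_{11}$; since hyperquadrics move in $\mb{P}^{14}$, the net codimension is at least $23-14=9$. The paper then defers the singular-quadric details to \cite[Lemma~2.2.2]{Cot}.

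You replace the raw condition count by structural input. For smooth $Q$ you invoke convexity of $SO(5)/P$ to get the fiber dimension $37$ exactly via $\chi(f^*T_Q)$; for the singular ranks you unwind the cone structure and parametrize maps through the Segre and Veronese. This is more self-contained (you never need to argue that the $23$ quadratic conditions on the $a_{i,j}$ are independent), and it produces equalities rather than inequalities. The paper's version, by contrast, is degree-agnostic: the same sentence with $11$ replaced by $d$ gives codimension $\geq 2d+1-14$ for any $d$, whereas your cone-by-cone parameter count, while routine, must be redone. One small point: in your rank-$4$ and rank-$3$ counts there are two independent $\mb{C}^*$-ambiguities (the $(h,g)$ rescaling you name, plus the internal Segre/Veronese one), not one; with both removed the fiber dimension is $d'+26\le 37$ in rank~$4$ and exactly $37$ in rank~$3$, so your stated bound $\le 37$ is correct either way.
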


\begin{proof}
This, too, follows from a dimension count. Those morphisms whose images lie in a {\it fixed} hyperquadric have codimension $2 \cdot 11+1=23$. Hyperquadrics determine a $\mb{P}^{14}$; allowing these to vary, we obtain the desired estimate. See the proof of \cite[Lemma 2.2.2]{Cot}, which also treats the case of singular hyperquadrics.
\end{proof}

\subsection{RTB stratification of the mapping space}
Assume $n \geq 3$, and let $f\:\mathbb{P}^1 \rightarrow \mathbb{P}^n$ be a map with image
$C$ of degree $d$. Note that the $(-1)$-twist of the restricted
tangent bundle, $f^* (T_{\mathbb{P}^n}(-1))$, has degree $d$ and rank $n$ over
$\mathbb{P}^1$, say with splitting 
\begin{equation*}
f^* (T_{\mathbb{P}^n}(-1))=
\bigoplus_{i=1}^n \mathcal{O}_{\mathbb{P}^1}(a_i). 
\end{equation*}
A result of Verdier's (see \cite{V} and \cite{R})
establishes that the scheme of morphisms $\mathbb{P}^1 \rightarrow
\mathbb{P}^n$ of degree $d$ corresponding to a particular splitting type
$(a_1,\dots,a_n)$ with $a_1 \geq \dots \geq a_n$ is
irreducible of the expected codimension
\begin{equation*}
\sum_{i\neq j} \max\{0,a_i-a_j-1\}.
\end{equation*}
\beg{defn}
Let the {\bf RTB stratification} of $M_d^n$ denote the stratification of $M_d^n$ according
to the splitting type of the restricted tangent bundle.
\end{defn}

When $n=4$ and $d=11$, every special splitting stratum has codimension 2 or more, and (4,3,2,2) is the splitting type associated to the unique stratum of codimension 2. More special strata have codimension at least 6.

\subsection{Beginning of the proof of Theorem~\ref{1.1bis}}\label{beginofproof}
We will deduce Theorem~\ref{1.1bis} as a consequence of three lemmas, the first of which we prove in this section.

\beg{lem}\label{lem 2.3}
Let $C \sub \mb{P}^4$ denote a nondegenerate rational curve of degree 11 such that $g+i > 2g$. Then either $C$ is nonproblematic, or $g \leq 2$ and $h^0(\mc{I}_C(3)) \geq 5$.
\end{lem}

\beg{proof}[Proof of Lemma~\ref{lem 2.3}]
We argue separately for each possible $\mbox{gin}(\mathcal{I}_{\Ga})$. As a matter of convenience, we group possible hyperplane gins according to their number of quadratic minimal generators. In each case, we consider the possible $\mbox{gin}(\mc{I}_C)$ corresponding to the given choice of $\Ga$. Whenever such a generic initial ideal corresponds to a nonproblematic curve, it may be discarded.

{\fl \bf Case 1.} {\it Say $\mbox{gin}(\mc{I}_{\Gamma})=\mbox{Borel}(x_2^4,x_1x_2^2)$.}

Theorem~\ref{coh_bounds} implies $g+i \leq 8$ for every irreducible nondegenerate $C$ with generic hyperplane section $\Ga$.
By ~\cite[Thm. 3.1]{GLP}, $\mc{I}_{C}$ is
$8$-regular, unless $C$ admits a $9$-secant line. On the other hand, by Lemma~\ref{1a}, rational degree-$11$ curves with 9-secant lines determine a subvariety of $M_{11}$ of codimension at least 12. Consequently, 8-irregular curves are nonproblematic.

Assume now that $C$ is $8$-regular. If $g \geq 5$, then since $g+i \leq 8$ we have $g+i < \min(2g,12)$, so $C$ is nonproblematic. On the other hand, if $g \leq 5$, then in fact $g+i \leq 5$. To see why, note that $\mbox{gin}(\mc{I}_C)$ is obtained from $\mbox{Borel}(x_2^4,x_1x_2^2)$ by applying at most eight $C$-rewriting rules. The first three of these automatically occur in degrees less than 6 since $\mbox{Borel}(x_2^4,x_1x_2^2)$ is minimally generated in degrees at most 4.

Next, assume that the sheaf $f^*(T_{\mb{P}^4}(-1))$ associated to the map $f\: \mb{P}^1 \rt \mb{P}^4$ defining $C$ has splitting type $(3,3,3,2)$ or $(4,3,2,2)$, since otherwise $C$ belongs to a codimension-6 subvariety of $M_{11}$ and is, therefore, nonproblematic. Then \cite[Proposition 1.2]{GLP} implies that $\mc{I}_C$ is $7$-regular. The same proposition implies that if $f^*(T_{\mb{P}^4}(-1))$ has splitting type $(3,3,3,2)$, then $C$ is 6-regular. So we may assume that $f^*(T_{\mb{P}^4}(-1))$ has splitting type $(4,3,2,2)$, and that $\mc{I}_C$, and consequently $\mbox{gin}(\mc{I}_C)$, is 7-regular.

Next, note that any $C$ for which $\mc{I}_C$ is 7-regular with hyperplane gin $\mbox{Borel}(x_2^4,x_1x_2^2)$ verifies $i \leq 3$. Namely, $\mbox{gin}(\mc{I}_C)$ is obtained from $\mbox{Borel}(x_2^4,x_1x_2^2)$ by applying at most eight $C$-rewriting rules, and because $C$ is $7$-regular, all of these are in degrees less than 7. Without loss of generality, assume that the first rewriting applied is $x_2^4 \mapsto (x_2^5,x_2^4x_3)$, and that the ideal obtained after six rewritings is $\mbox{Borel}(x_2^4x_3^3,x_1x_2^2)$. Any two further rewritings resulting in a $7$-regular ideal necessarily occur in degrees less than 6, so $i \leq 3$, as desired.

Since $i \leq 3$, we deduce that $g+i < \min(2g,12)$ (and, consequently, $C$ is nonproblematic) whenever $g \geq 4$. Moreover, if $g=3$, then in fact $i \leq 2$, in which case $C$ is nonproblematic. So assume $g \leq 2$. If $g=2$, we observe the following dichotomy: either
\[
g+i < 2g, \text{ or } h^0(\mc{I}_C(3)) \geq 7.
\]
In other words, if $g=2$, then either $C$ is nonproblematic, or $h^0(\mc{I}_C(3)) \geq 7$.

Similarly, if $C$ is of genus $0$ or $1$, then $C$ obeys the following basic trichotomy: either
\[
g+i< 2, \hspace{5pt} g+i < 2g, \text{ or } h^0(\mc{I}_C(3)) \geq 7
\]
unless $g=1$ and
\[
\beg{split}
\mbox{gin}(\mc{I}_C)&=\mbox{Borel}(x_2^4x_3^3,x_2^5x_3,x_1x_2^2x_3,x_0x_2^2), \text{ or}\\
\mbox{gin}(\mc{I}_C)&=\mbox{Borel}(x_2^4x_3^3,x_2^5x_3,x_1x_2^2x_3,x_1^3,x_0x_2^2x_3, x_0x_1x_2),
\end{split}
\]
or $g=0$ and
\[
\beg{split}
\mbox{gin}(\mc{I}_C)&=\mbox{Borel}(x_2^4x_3^3,x_2^6,x_1x_2^2x_3,x_0x_2^2), \text{ or} \\
\mbox{gin}(\mc{I}_C)&=\mbox{Borel}(x_2^4x_3^3,x_2^6,x_1x_2^2x_3,x_1^3,x_0x_2^2x_3).
\end{split}
\]
Each of the latter four ideals is such that $g+i=2$. Using Macaulay2 \cite{GS}, on the other hand, one checks that the {\it generic} element $C$ of the $(4,3,2,2)$ stratum is 4-regular, and therefore verifies $i=0$. See Section~\ref{sec-five} for a proof. So any curve with gin equal to any one of the four ideals listed above belongs to a closed subspace of codimension at least 3 in $M_{11}$, and is, therefore, nonproblematic. 
Lemma~\ref{2.3} follows immediately in this case.

{\fl \bf Case 2: $\mbox{gin}(\mc{I}_{\Ga})$ has exactly one quadratic
generator.} \\
According to Section~\ref{hypsections}, we have
\beg{enumerate}
\item $\mbox{gin}(\mc{I}_{\Ga})=\mbox{Borel}(x_2^4,x_1^2x_2,x_0^2)$,
or
\item $\mbox{gin}(\mc{I}_{\Ga})= \mbox{Borel}(x_2^5,x_1x_2^2,x_0^2)$.
\end{enumerate}
In the first subcase, Theorem~\ref{coh_bounds} yields $g+i \leq 9$. It now follows from Lemma~\ref{1a} (resp., Lemma~\ref{1b}) that 8-irregular curves (resp., curves that lie on hyperquadrics) with hyperplane section as in the first case are nonproblematic. Moreover, if $C$ is
8-regular but 7-irregular with a hyperplane section gin as in the first subcase, then
inspection yields $i \leq 5$, with equality if and
only if
\[
\mbox{gin}(\mc{I}_C)=\mbox{Borel}(x_0^2x_3,x_0x_2^2,x_1^2x_2,x_1x_2^3,x_2^6x_3,x_2^4x_3^4).
\]
On the other hand, by \cite[Prop. 1.2]{GLP}, we have $h^1(\mc{I}_C(6))=0$ whenever the corresponding (twisted) restricted tangent bundle $f^*T_{\mathbb{P}^4}(-1)$ has splitting type $(4,3,2,2)$, while the
next-most generic splitting stratum $(4,4,2,1)$ has codimension 6. Whence, we
may assume $C$ is 7-regular. Our upper bound on $i$ improves, accordingly, to $i \leq 3$. Arguing as in Case 1, we deduce that either $C$ is nonproblematic, or $g(C) \leq 2$, and $C$ is contained in seven linearly independent hypercubics.

{\fl Similarly}, in the second subcase, we have $g+i \leq 10$. As before, we may assume any $C$ with the corresponding
hyperplane section gin is 8-regular and lies on no hyperquadric. On
the other hand, if $C$ is 8-regular but 7-irregular, then inspection yields $i \leq
6$, with equality if and only if $h^0(\mc{I}_C(3))=9$. However, if $C$ is 7-irregular, then its corresponding restricted tangent bundle
$f^*T_{\mathbb{P}^4}(-1)$ has splitting type of codimension 6 or more, by \cite[Prop. 1.2]{GLP}.
On the other hand, the codimension-6 splitting stratum $(4,4,2,1)$ is irreducible, and one can check with Macaulay2 (see Section~\ref{sec-five}) that a generic element of
the latter splitting stratum is 6-regular. Consequently, we may
further assume $C$ to be 7-regular; our bound on $i$ then
improves to $i \leq 3$. It now follows as in Case 1 that if $g \geq 3$, then $C$ is nonproblematic. Further, if $g=0$, then either $C$ is nonproblematic, or else $i=3$, and $C$ is contained in seven linearly independent hypercubics. However, if $g=1$ and $i=2$, then we can only deduce that $C$ is contained in {\it five} linearly independent hypercubics; e.g., we might have
\[
\mbox{gin}(\mc{I}_C)= \mbox{Borel}(x_0^2x_3,x_0x_1^2,x_1x_2^2x_3,x_2^5x_3^2).
\]
Similarly, if $g=2$ and $i=2$, then $C$ might be contained in {\it six} linearly independent hypercubics.

{\fl \bf Case 3: $\mbox{gin}(\mc{I}_{\Ga})$ has exactly two quadratic
generators.} \\ If  $\mbox{gin}(\mc{I}_{\Ga})$ is 4-regular,
then either
\beg{enumerate} 
\item $\mbox{gin}(\mc{I}_{\Ga})=\mbox{Borel
}(x_0x_1,x_0x_2^3,x_1^2x_2,x_2^4)$, or
\item $\mbox{gin}(\mc{I}_{\Ga})=\mbox{Borel
}(x_0x_1,x_0x_2^2,x_1^3,x_2^4)$;
\end{enumerate}
in either subcase, we have $g+i \leq 10$, by Theorem~\ref{coh_bounds}.

Once more, curves lying in hyperquadrics and 8-irregular
curves are nonproblematic, and may be ignored. Exactly as in Case 1, we deduce that either $C$ is nonproblematic, or $g(C) \leq 2$, and $C$ is contained in seven linearly independent hypercubics.

On the other hand, if $\mbox{gin}(\mc{I}_{\Ga})$ is 4-irregular, then either
\beg{enumerate}
\item $\mbox{gin}(\mc{I}_{\Ga})=\mbox{Borel
}(x_0x_1,x_0x_2^2,x_1^2x_2,x_1x_2^3,x_2^5)$,
or
\item $\mbox{gin}(\mc{I}_{\Ga})=\mbox{Borel
}(x_0x_1,x_0x_2^3,x_1x_2^2,x_2^5)$;
\end{enumerate}
in either subcase, we have $g+i \leq 11$, by Theorem~\ref{coh_bounds}.

As before, curves lying in hyperquadrics and 8-irregular
curves are nonproblematic, and so may be ignored. As in Case 2, we find that $C$ is either nonproblematic, or $g(C) \leq 2$ and $C$ is contained in seven linearly independent hypercubics, with 2 exceptions. Namely, $C$ might be contained in five linearly independent hypercubics, with $g=1, i=2$, or $C$ might be contained in six linearly independent hypercubics, with $g=2, i=2$.  


{\fl \bf Case 4: $\mbox{gin}(\mc{I}_{\Ga})$ has exactly three quadratic
generators.} \\ If $\mbox{gin}(\mc{I}_{\Ga})$ is 4-regular, we have
\[
\mbox{gin}(\mc{I}_{\Ga})= \mbox{Borel}(x_0x_2,x_2^4).
\]
Here $g+i \leq 11$, by Theorem~\ref{coh_bounds}. As usual, curves lying in hyperquadrics and 8-irregular
curves are nonproblematic, and may be ignored. Exactly as in Case 1, we deduce that either $C$ is nonproblematic, or $g(C) \leq 2$ and $C$ is contained in seven linearly independent hypercubics.

On the other hand, If $\mbox{gin}(\mc{I}_{\Ga})$ is 4-irregular, then
\[
\mbox{gin}(\mc{I}_{\Ga})= \mbox{Borel}(x_2^5,x_1x_2^3,x_1^3,x_0x_2), \text{ or } \mbox{gin}(\mc{I}_{\Ga})=\mbox{Borel}(x_2^5,x_1^2x_2,x_0x_2).
\]
Here $g+i \leq 12 \text{ (resp., }g+i \leq 13)$ by Theorem~\ref{coh_bounds} for the first (resp., second) hyperplane gin. Arguing much as before, we deduce that either $C$ is nonproblematic, or $g(C) \leq 2$ and $C$ is contained in at least five linearly independent hypercubics. 


\end{proof}
\subsection{Special geometry of rational curves with $h^0(\mc{I}_C(3)) \geq 5$}
Our next task is to understand the geometry of the linear system $\mathcal{L}$ of hypercubics
containing a nondegenerate degree-11 rational curve $C$ with $h^0(\mc{I}_C(3)) \geq 5$. 

{\fl To} this end, choose a basis of linearly independent hypercubics
$\mathcal{B}=\{X_1, \dots, X_5\}$ for $\mc{L}$. There are two basic possibilities to consider. Namely:
\begin{enumerate}
\item Every triple intersection $X_i \cap X_j \cap X_k, 1 \leq i,j,k \leq 5$ contains an irreducible
surface component $S_{i,j,k}$, or 
\item For some triple $(i,j,k)$, $X_i \cap X_j \cap X_k$ is a 1-dimensional
complete intersection.
\end{enumerate}

{\fl In this section, we treat the first possibility. For} simplicity, choose $(i,j,k)=(1,2,3)$ and
set $S:=S_{1,2,3}$, $X:= X_1 \cap X_2$. Let $S^{\prime}$ denote the
residual of $S$ in $X$.

Since $C$ lies on $X$ and is irreducible, $C$ necessarily lies
on $S$ or $S^{\prime}$, and since $S$ lies in $X_1 \cap
X_2 \cap X_3$, it follows that $C \subset S$. Moreover, Max Noether's
$AF+BG$ theorem \cite[p. 703]{GH} implies that $X_1 \cap X_2 \cap X_3$ is strictly contained in $X_1 \cap X_2$, so $S$ has degree at most 8. Now let $X=X_1
\cap X_2= S \cup S^{\prime}$, where $S$ and $S^{\prime}$ are surfaces
without any common components. Thus, $\mbox{deg }S+\mbox{deg
}S^{\prime}=9$.

\beg{lem}\label{surface case}
Assume that $C \sub \mb{P}^4$ is a nondegenerate rational curve of degree 11, that $g(C) \leq 2$, $C$ is contained in no hyperquadric, $h^0(\mc{I}_C(3)) \geq 5$, and that, moreover, $C$ is contained in a {\it subsurface} of the nontransversal intersection of 3 linearly independent hypercubics. Then $C$ is nonproblematic.
\end{lem}

\beg{proof}[Proof of Lemma~\ref{surface case}.]
{\fl We} now argue case-by-case, according to the degree of $S$, beginning with
$\deg(S)=4$. Here $\deg(S) \geq 3$ because $S$
contains the nondegenerate curve $C$. Moreover, if $\deg(S)=3$, then $S$ is either a nondegenerate cubic scroll
  in $\mathbb{P}^4$  or a cone over a twisted cubic in $\mathbb{P}^3
  \subset \mathbb{P}^4$, either of which lies on a hyperquadric. Replacing $S$ by any maximal-degree component of the corresponding maximal reduced subscheme $S^{\mbox{red}}$, we may further assume $S$ is irreducible and reduced.

\begin{enumerate}
\item {\bf $\mbox{deg }S=4$.} By \cite[Thm 2.1]{BrSc}, $S$ is the projection from $\mathbb{P}^5$ of either a rational normal scroll or
  the Veronese surface.
Note that every curve $C$ lying on $S$ arises as the projection of some irreducible
rational curve on either the
quartic scroll or the Veronese surface in $\mathbb{P}^5$.

{\bf Quartic scroll case.} Recall (see \cite[522-523]{GH}) there are two types of quartic scrolls
$S$. The first, $S_{2,2}$, is the
image of $\mb{F}_0:=\mathbb{P}^1 \times \mathbb{P}^1$ under the birational
map $\phi_{2,2}$
associated to the line bundle $\mc{O}_{\mb{F}_0}(e+2f)$. Here $e$ and $f$
are the fiber classes on $\mathbb{P}^1 \times \mathbb{P}^1$, so $e^2=f^2=0$ and $e \cdot
f=1$.

We check next that
\begin{equation}\label{h^0ofF_0}
h^0(\mc{O}_{\mb{F}_0}(e+2f))=6. 
\end{equation}
To this end, note that the canonical class on $\mb{F}_0$ is
\begin{equation*}
K_{\mb{F}_0}=-2e-2f.
\end{equation*}

Riemann--Roch yields
\begin{equation*}
\chi(\mc{O}_{\mb{F}_0}(e+2f))=6.
\end{equation*}

On the other hand, Serre duality yields
\begin{equation*}
h^2(\mc{O}_{\mb{F}_0}(e+2f))=h^0(K_{\mb{F}_0}(-e-2f))=0,
\end{equation*}
since the line bundle $K_{\mb{F}_0}(-e-2f)$ has negative degree. 

To deduce \eqref{h^0ofF_0}, it remains to
show that $h^1(\mc{O}_{\mb{F}_0}(e+2f))=0$. For that purpose, note that for all
$m \in \mathbb{Z}$ there is an
exact sequence
\begin{equation*}
0 \rightarrow \mc{O}_{\mb{F}_0}(e+mf) \rightarrow \mc{O}_{\mb{F}_0}(e+(m+1)f) \rightarrow \mc{O}_f \cong
\mc{O}_{\mathbb{P}^1} \rightarrow 0.
\end{equation*}
Here $H^1(\mc{O}_{\mb{F}_0})=0$. It follows that $H^1(\mc{O}_{\mb{F}_0}(e+mf))=0$ for all $m \geq 0$.

From \eqref{h^0ofF_0}, we conclude that the maps $\phi_{2,2}: \mb{F}_0
\rightarrow \mathbb{P}^5$, each of which is given by 6 sections of
$\mc{O}_{\mb{F}_0}(e+2f)$, determine a 36-dimensional family. Similarly, the
projected images $\phi^{\prime}_{2,2}: \mb{F}_0 \rightarrow \mathbb{P}^4$
are given by 5 sections of $\mc{O}_{\mb{F}_0}(e+2f)$, so they comprise a
30-dimensional family. Quotienting by the action of the 6-dimensional automorphism group of $\mb{F}_0$, it follows that
projections of quartic scrolls of type $S_{2,2}$ comprise a
24-dimensional family. 

Next, we bound the dimension of the space of rational curves $C$ of degree 11 on a fixed scroll $S_{2,2}$. Here we assume $C$ is of genus $g \leq 2$, since $C$ is nonproblematic otherwise. 

Letting $[C]=ae+bf$, the adjunction formula, coupled with the fact that $\deg(C)=11$, now implies that
\[
ab-a-b+1=g, \text{ and } 2a+b=11
\]
where $0 \leq g \leq 2$. It follows immediately that $g=0$ and either $(a,b)=(1,5)$ or $(a,b)=(9,1)$.
In general, curves of type $(a,b)$ vary in a
family of dimension $a+b+2$. Therefore, those of types $(1,5)$ and $(9,1)$
determine 8- and 12-dimensional families, respectively. It follows that genus-zero rational curves of degree 11 that lie on projections of quartic scrolls of type $S_{2,2}$ determine an at-most 36-dimensional family. Consequently, such curves are nonproblematic.

We now treat rational curves of degree 11 on projections of quartic scrolls of type $S_{1,3}$. Every such curve pulls back to a rational, degree 11 curve on some quartic scroll $S_{1,3} \subset
\mathbb{P}^5$. Here $S_{1,3}$ is the image of $\mb{F}_2$, the Hirzebruch
surface $\mathbb{P}(\mc{O}_{\mb{P}^1} \oplus \mc{O}_{\mb{P}^1}(2))$ under the birational
map $\phi_{1,3}$
associated to the line bundle $\mc{O}_{\mb{F}_2}(e+3f)$, where $e$ (the class of
the unique section with negative self-intersection) and $f$ (the class
of a fiber)
satisfy $e^2=-2$, $f^2=0$ and $e \cdot
f=1$. 

A calculation analogous to the one carried
out above for $\mb{F}_0$ yields
$h^0(\mc{O}_{\mb{F}_2}(e+3f))=6$. Therefore, the projected scrolls
$\phi^{\prime}_{1,3}: \mb{F}_2 \rightarrow \mathbb{P}^4$, each given by 5 sections of $\mc{O}_{\mb{F}_2}(e+3f)$, vary in a
30-dimensional family. Quotienting by the
action of the 7-dimensional automorphism group of $\mb{F}_2$, we find that
projections of quartic scrolls of type $S_{1,3}$ comprise a
23-dimensional family. Since $e$ and $f$ are rational, the adjunction formula
implies the canonical class of $\mb{F}_2$ is
$K_{\mb{F}_2}= -2e-4f$. Similarly, letting $[C]=ae+bf$ denote the class of a rational curve of genus $g$
on $S_{1,3}$, adjunction implies 
\[
-2a(a-2) +a(b-4)+ b(a-2) =-2a^2+2ab-2b = 2g-2.
\]
Moreover, if $\mbox{deg }C=11$, then
\[
a+3b=11.
\]
As before, we may assume, furthermore, that $0 \leq g \leq 2$. It follows that
\[
12b^2-76b+120=-g,
\]
which forces $g=0$ and $(a,b)=(2,3)$. 

By the same argument we used to deduce
  the vanishing of higher cohomology of $\mc{O}_{\mb{F}_0}(e+mf)$ for all $m \geq 0$,
  we find that $h_0(\mc{O}_{\mb{F}_2}(2e+3f))=\chi(\mc{O}_{\mb{F}_2}(2e+3f))$. 
On the other hand, Riemann--Roch yields $\chi(\mc{O}_{\mb{F}_2}(2e+3f))=6$. It
follows that rational curves lying on projections of quartic scrolls
of type $(1,3)$ determine a 29-dimensional family and are, therefore, nonproblematic.

{\bf Veronese case.} Every Veronese quartic $V \subset \mathbb{P}^5$ is the image of $\mathbb{P}^2$ under the complete linear system 
$|\mc{O}_{\mathbb{P}^2}(2)|$. In particular, every divisor on $V$ has even degree,
so no rational curve of degree 11 arises via projection from a curve on $V$.

We now treat the remaining possibilities for $\deg(S)$. An important
upshot of our analysis will be that $h^0(\mathcal{I}_S(3)) \leq 4$ whenever $\mbox{deg }S \geq 5$.

\item {\bf $\mbox{deg }S=5$.} Let $\La$ denote a generic hyperplane section of $S$. Since $C$ lies on no hyperquadric, the standard exact sequence
\beg{equation}\label{stdidealseq}
0 \rightarrow \mc{I}_S(2) \rightarrow \mc{I}_S(3) \rightarrow \mc{I}_{\La}(3) \rightarrow 0 
\end{equation}
induces an inclusion $H^0(\mc{I}_S(3)) \hra H^0(I_{\La}(3))$. Whence, it suffices to bound the number of linearly independent cubics containing $\La$.

To this end, note that the curve $\La$ is 4-regular, by \cite[Thm. 1.1]{GLP}; accordingly, the standard exact sequence relating the ideal and structure sheaves of $\La$ induces a short exact sequence of spaces of global sections:
\beg{equation}\label{stdLa}
0 \ra H^0(\mc{I}_{\La}(3)) \ra H^0(\mc{O}_{\mb{P}^3}(3) \ra H^0(\mc{O}_{\La}(3)) \ra 0.
\end{equation}
Castelnuovo's genus bound \cite[p. 116]{ACGH} implies $g(\La) \leq 2$; accordingly, $K_{\La}(-3)$ has negative degree, and by Serre duality we conclude that
\[
h^1(\mc{O}_{\La}(3))= h^0(K_{\La}(-3))=0.
\]

It follows that $h^0(\mc{O}_{\La}(3))= 16-g(\La)$, and therefore, by \eqref{stdLa}, that
\[
h^0(\mc{I}_{\La}(3))= 4+ g(\La).
\]
In particular, if $g(\La)=0$, we conclude that 
\[
h^0(\mc{I}_S(3)) \leq h^0(\mc{I}_{\La}(3)) \leq 4.
\]

If $g(\La)=1$, we argue as follows. We may assume that the residual $S^{\pr}$ of $S$ is irreducible, since otherwise $S^{\pr}$ contains either a 2-plane or a quadric surface as an irreducible component, and we may argue as in case $\deg(S)=7$ or $\deg(S)=8$, respectively. By \cite[Cor. 5.2.14]{M}, it follows that $S^{\pr}$ is an irreducible surface of sectional genus $0$. In fact, we may further assume that $S^{\pr}$ is {\it reduced}. Otherwise, Bertini's theorem implies that $S^{\pr}$ is contained in the base locus $\mbox{Bs}(\mc{L})$, and whence, that 
\[
X \sub \mbox{Bs}(\mc{L})
\]
which violates Max Noether's $AF+BG$ theorem. On the other hand, any reduced irreducible surface of degree 4 in $\mb{P}^4$ is, by \cite[Thm. 2.1]{BrSc}, the projection of either a (possibly singular) quartic scroll or the Veronese surface in $\mb{P}^5$. Moreover, any such surface has a minimal free resolution of the form
{\small
\[
0 \ra \mc{O}_{\mb{P}^4}(-5) \ra \mc{O}_{\mb{P}^4}(-4)^{\oplus 4} \ra \mc{O}_{\mb{P}^4}(-2) \oplus \mc{O}_{\mb{P}^4}(-3)^{\oplus 3} \ra \mc{O}_{\mb{P}^4} \ra \mc{O}_{S^{\pr}} \ra 0
\]
}
in the quartic scroll case, and
{\small
\[
0 \ra \mc{O}_{\mb{P}^4}(-6) \ra \mc{O}_{\mb{P}^4}(-5)^{\oplus 5} \ra \mc{O}_{\mb{P}^4}(-4)^{\oplus 10} \ra \mc{O}_{\mb{P}^4}(-3)^{\oplus 7} \ra \mc{O}_{\mb{P}^4} \ra \mc{O}_{S^{\pr}} \ra 0
\]
}
in the Veronese case.
Using \cite[Prop. 5.2.10]{M}, we now deduce that the minimal free resolution of $S$ is either of the form
\[
\cdots \ra \mc{O}_{\mb{P}^4}(-2)^{\oplus 4} \oplus \mc{O}_{\mb{P}^4}(-3) \ra \mc{O}_{\mb{P}^4} \ra \mc{O}_{S} \ra 0,
\]
in which case $C$ is contained in a hyperquadric and, is therefore, nonproblematic, or
\[
\cdots \ra \mc{O}_{\mb{P}^4}(-1)^{\oplus 5} \oplus \mc{O}_{\mb{P}^4}(-3) \ra \mc{O}_{\mb{P}^4} \ra \mc{O}_{S} \ra 0,
\]
in which case $C$ is degenerate, which contradicts our initial assumptions.

Finally, if $g(\La)=2$, then $\La$ is ACM and contained in a quadric surface \cite[Thm. 3.7]{Ha}. It follows that $S$ is contained in a hyperquadric, in which case $C$ is nonproblematic.

\item {\bf $\mbox{deg }S=6$.} We may assume that the residual $S^{\prime}$
  is irreducible, since otherwise $S^{\prime}$ has a 2-plane as a
  component and we may argue as in the case $d=8$ below. Thus, $S^{\pr}$ is either a cubic scroll, or a cone over a twisted cubic. In either case, $S^{\pr}$ has Hilbert polynomial
\[
\chi_{S^{\pr}}(t)= 3 \binom{t+1}{2}+ \binom{t}{1}+ 1= \fr{3}{2} t^2+ \fr{5}{2}t+1.
\]
Using the basic
  liaison sequence \cite[p.551]{Na}
\[
0 \ra \omega_S(-1) \ra \mc{O}_X \ra \mc{O}_{S^{\prime}} \ra 0,
\]
Serre duality, and the fact that
\[
\chi_X(t)=\binom{t-2}{4}-2\binom{t+1}{4}+\binom{t+4}{4}=9/2t^2-9/2t+6,
\]
we deduce that
\[
\chi_S(t)=3t^2+t+1.
\]

Note that the holomorphic Euler characteristic of
  a generic hyperplane section of $S$ is given by
\[
\chi_{S \cap H} = \chi_S(0)-\chi_S(-1).
\]
Correspondingly, $S$ has sectional genus 
\[
1- \chi_{S \cap H}=3.
\]  
It follows from \cite[pp.175-6]{I} that either $\La$ is ACM, in which case $S$ is an ACM surface contained in 4
  linearly independent hypercubics, or the generic hyperplane section $\Lambda$
  of $S$ is a curve of type $(2,4)$ on a quadric surface $Q$. In the
  latter situation, $h^0(\mc{I}_C(3))$ is majorized by $h^0(\mc{I}_{\Lambda/\mathbb{P}^3}(3))$,
  which we compute as follows.

First, a chase through exact sequences shows that
\[
h^1(\mc{I}_{\Lambda/\mathbb{P}^3}(3))=h^1(\mc{I}_{\Lambda/Q}(3))=h^1(\mc{O}_Q(-1,1)),
\]
and the rightmost quantity is zero, by the K\"{u}nneth formula.
It follows that
\[
h^0(\mc{I}_{\Lambda/\mathbb{P}^3}(3))= h^0(\mc{O}_{\mathbb{P}^3}(3))-
h^0(\mc{O}_{\La}(3)).
\]
Here $h^0(\mc{O}_{\La}(3))=3 \cdot 6-3+1=16$ by Riemann--Roch. (We have applied Serre duality to deduce $h^1(\mc{O}_{\La}(3))=0$.) So $h^0(\mc{I}_{\Lambda/\mathbb{P}^3}(3))=4 \geq
h^0(\mc{I}_C(3))$.

\item {\bf $\mbox{deg }S=7$.} In this case, $S^{\prime}$ is a (possibly degenerate)
  quadric. We may assume that $S^{\prime}$ is, in fact, irreducible, since otherwise it contains a 2-plane as a component, and we may argue as in the case $\mbox{deg }S=8$. Since the structure sheaf of $S^{\pr}$ has a Koszul resolution
\[
0 \ra \mc{O}_{\mb{P}^4}(-3) \ra \mc{O}_{\mb{P}^4}(-1) \oplus \mc{O}_{\mb{P}^4}(-2) \ra \mc{O}_{\mb{P}^4} \ra \mc{O}_{S^{\pr}} \ra 0,
\]
we have
\[
\chi_{S^{\prime}}(t)=\binom{t+1}{4}- \binom{t+3}{4} -\binom{t+2}{4}+
\binom{t+4}{4}=t^2+2t+1.
\]
It follows that $\chi_S(t)=7/2t^2-1/2t+2$, and that the sectional genus
of $S$ is 5. Since $S$ (which contains $C$) is nondegenerate in
$\mathbb{P}^4$, its generic hyperplane section is an irreducible,
nondegenerate curve of degree 7 and genus 5 in $\mathbb{P}^3$. Since every such curve is ACM
~\cite[Prop. 1.2 and p.281]{PS} and contained in exactly three
hypercubics and no hyperquadrics, it follows that $S$ is contained in exactly three
hypercubics.

\item {\bf $\mbox{deg }S=8$.} In this case, $S^{\prime}$ is a
  2-plane. Via liaison, we deduce that
\[
\chi_{S}(t)=4t^2-3t+4.
\]
It follows, e.g., by the formula in ~\cite[Prop. 9]{Na}, that $S \cap
S^{\prime}$ is a plane quartic $F$. Note that the space of hypercubics containing
$F$ is four-dimensional (corresponding to those hypercubics containing
the plane spanned by $F$), so at most 4 linearly independent
hypercubics contain $S$.
\end{enumerate}

{\fl Now} assume that {\it $\deg(S_{i,j,k}) \geq 5$
  for all triples $(i,j,k)$}. If $(i_1,j_1,k_1)$ and $(i_2,j_2,k_2)$ are two
  distinct
  triples of indices that overlap in two places, say
  $(i_1,j_1)=(i_2,j_2)$, then the surface $S^*:=S_{i_1,j_1,k_1}
  \cup S_{i_2,j_2,k_2}$ lies in $X_{i_1} \cap X_{j_1}$. By degree
  considerations, it follows that
  $S_{i_1,j_1,k_1}=S_{i_2,j_2,k_2}$. So, by varying the choices of
  triples, we deduce that all of the $S_{i,j,k}$ agree,
  i.e., that the base locus $\mbox{Bs}(\mc{L})$ contains a
surface component $S$. This directly contradicts the fact (also
  demonstrated in the analysis above) that
  $h^0(\mathcal{I}_{S}(3))= h^0(\mathcal{I}_{S_{i,j,k}}(3)) \leq
  4$. Our proof is now complete.

\end{proof}

\subsection{Rational curves inside complete intersections of type $(3,3,3)$}\label{ci333}

In this section, we complete our study of $\mc{L}$ by proving the following result.

\beg{lem}\label{(3,3,3) curve case}
Let $C \sub \mb{P}^4$ denote a nondegenerate rational curve of degree 11, such that $g(C) \leq 2$, $C$ is contained in no hyperquadric, and $h^0(\mc{I}_C(3)) \geq 5$. Assume, moreover, that the base locus of the system $\mc{L}$ of linearly independent hypercubics containing $C$ contains no surface. Then $C$ is nonproblematic.
\end{lem}

\beg{proof}[Proof of Lemma~\ref{(3,3,3) curve case}.]
Let $X_1,X_2,X_3$ be three linearly
independent hypercubics containing $C$. Assume, for the sake of argument, that $X:= X_1 \cap X_2 \cap X_3$ is a
complete intersection curve. Since $\mc{O}_X$ has a free resolution of the form
\[
0 \rightarrow \mc{O}_{\mathbb{P}^4}(-9) \rightarrow
\mc{O}_{\mathbb{P}^4}(-6)^{\oplus 2} \rightarrow
\mc{O}_{\mathbb{P}^4}(-3)^{\oplus 3} \rightarrow \mc{O}_{\mathbb{P}^4}
\rightarrow \mc{O}_X \rightarrow 0,
\]
additivity of the Hilbert polynomial yields
$\chi_X(t)=27t-54$. Let $C^{\prime}$ denote the residual of $C$ in
$X$; then by the Hartshorne--Schenzel theorem ~\cite[Theorem 5.3.1]{M},
$H^1(\mc{I}_C)_{*}$ and 
\begin{equation} \mbox{Hom}_{\mb{C}}(H^1(\mc{I}_{C^{\pr}}),\mb{C})(-4)_{*}
\notag \end{equation} are
duals, where the $k$th graded piece of the module on the right is
\[
\mbox{Hom}_{\mb{C}}(H^1(\mc{I}_{C^{\pr}}(4-k)),\mb{C}). 
\]
Whence,
$h^1(\mc{I}_C(t))=h^1(\mc{I}_{C^{\prime}}(4-t))$ for all integers $t$. In particular, we
have 
\[
h^1(\mc{I}_{C^{\prime}}(-1)) \neq 0.
\]
On the other hand, the long exact sequence in cohomology associated to the standard sequence that relates the structure and ideal sheaves of $C$ yields
\[
h^1(\mc{I}_{C^{\prime}}(-1))=h^0(\mc{O}_{C^{\prime}}(-1)).
\]
It follows that $C^{\prime}$
contains a maximal nonreduced subscheme $Y$, whose degree is at least
2. By Bertini's theorem, $\mc{L}$ is smooth away from its base locus
$\mbox{Bs}(\mc{L})$, so we
conclude that $C \cup Y \subset \mbox{Bs}(\mc{L})$. 

{\fl Now} let $C^{\prime}=Y \cup Y^{\pr}$, where $Y$ and
$Y^{\pr}$ have no common irreducible component.

{\fl Note} that $Y^{\pr}$ is nontrivial, since otherwise
$\mbox{Bs}(\mathcal{L})$ contains all of $X$. This is absurd, since
$\mbox{Bs}(\mathcal{L})$ is the intersection of at least five hypercubics and $X$
is the complete intersection of three of these.

{\fl Moreover}, we have
\begin{equation*}
h^1(\mc{I}_{C \cup Y}(5))=h^1(\mc{I}_{Y^{\pr}}(-1))=0
\end{equation*}
by the Hartshorne--Schenzel theorem, together with the fact that
$Y^{\pr}$ is reduced. Likewise,
\begin{equation*}
h^2(\mc{I}_{C \cup Y}(4))=h^0(\mc{I}_{Y^{\pr}})-h^0(\mc{I}_X)=0,
\end{equation*}
by \cite[Cor. 3, p.552]{Na}.

{\fl It} follows that $C \cup Y$ is 6-regular; in particular, $\mbox{gin}(\mc{I}_{C \cup Y})$ is generated in degrees at most six. On the other hand, we have an
inclusion
\beg{equation}\label{gininj}
\mbox{gin}(\mc{I}_{C \cup Y}) \subset \mbox{gin}(\mc{I}_C)
\end{equation}
of generic initial ideals. The inclusion \eqref{gininj} imposes restrictions on $\mbox{gin}(\mc{I}_C)$ which will enable us to conclude that $C$ is nonproblematic.

{\fl To} this end, let $H \sub \mb{P}^4$ denote a general hyperplane, and let $\wt{\Ga}= (C \cup Y) \cap H$ and $\Ga^{\pr}=Y^{\pr} \cap H$ denote the corresponding hyperplane sections of $C \cup Y$ and $Y^{\pr}$, respectively, that are residual to one another inside $Z:=X \cap H$. The Cayley--Bacharach theorem \cite[Thm CB7, p.313]{EGH} implies that
\beg{equation}\label{hs1}
h^0(\mc{I}_{\Ga^{\pr}/H}(t))- h^0(\mc{I}_{Z/H}(t))= h^1(\mc{I}_{\wt{\Ga}/H}(5-t))
\end{equation}
for all integers $t$. 

\beg{claim}\label{h1la4claim}
We have
\beg{equation}\label{h1la4}
h^1(\mc{I}_{\Ga/H}(4)) \leq 1.
\end{equation}
\end{claim}

\beg{proof}[Proof of Claim~\ref{h1la4claim}.]
The scheme $Z$ is linearly nondegenerate. Consequently, \eqref{hs1} implies that
\[
h^1(\mc{I}_{\wt{\Ga}/H}(4))= h^0(\mc{I}_{\Ga^{\pr}/H}(1)).
\]
Now assume \eqref{h1la4claim} fails to hold. Then
\[
h^0(\mc{I}_{\Ga^{\pr}/H}(1)) \geq 2,
\]
i.e., $\Ga^{\pr}$ is supported along a line. Since $\Ga^{\pr}$ is a generic hyperplane section of $Y^{\pr}$, which is reduced, it follows that $\Ga^{\pr}$ is necessarily a point. In particular, since $h^1(\mc{I}_{\Ga^{\pr}/H}(2))$ computes the number of $\La$-rewritings applied in degree 3 or greater in forming $T(\mbox{gin}(\mc{I}_{\Ga^{\pr}/H}))$, by Fact~\ref{1.10} of Section~\ref{curvegins}, we have
\[
h^1(\mc{I}_{\Ga^{\pr}/H}(2))=0.
\]

{\fl On} the other hand, \eqref{hs1} implies that
\[
\beg{split}
h^1(\mc{I}_{\Ga^{\pr}/H}(2))= h^0(\mc{I}_{\wt{\Ga}/H}(3))- h^0(\mc{I}_{Z/H}(3)) \geq 2,
\end{split}
\]
which is a contradiction.
\end{proof}


{\fl A} key inference is now as follows. As usual, let $\Ga= C \cap H$.
\beg{claim}\label{key inference}
In forming the tree of minimal generators of $\mbox{gin}(\mc{I}_{C})$ from that of $\mbox{gin}(\mc{I}_{\Ga})$, no $C$-rewriting rule in degree 6 may be applied to a minimal generator of the form $x_0^ix_1^jx_2^k$, where $i+j+k=6$. Further, the following sequence of rewritings occurs at most once: $x_0^ix_1^jx_2^k$ of degree 5 is rewritten, and $x_0^ix_1^jx_2^kx_3$ of degree 6 is rewritten. 
\end{claim}

\beg{proof}[Proof of Claim~\ref{key inference}.]
To verify the first assertion, assume, for the sake of argument, that a rewriting rule in degree 6 were applied to a vertex corresponding to some minimal generator $x_0^ix_1^jx_2^k$, where $i+j+k=6$. By Borel-fixity, $\mbox{gin}(\mc{I}_{C \cup Y})$, which is contained in $\mbox{gin}(\mc{I}_C)$, would necessarily contain a minimal generator divisible by $x_0^ix_1^jx_2^{k+1}$, thereby contradicting the 6-regularity of $\mc{I}_{C \cup Y}$.

Similarly, to verify the second assertion, assume that two rewriting rules (in degrees 5 and 6, as described) were used. By Borel-fixity, $\mbox{gin}(\mc{I}_{C \cup Y})$ would then necessarily contain two minimal generators of the form $x_0^ix_1^jx_2^k$ in degree six. Since $\mbox{gin}(\mc{I}_{C \cup Y})$ contains no minimal generators in degrees $\geq 7$, those minimal generators would belong to the saturation of $\mbox{gin}(\mc{I}_{C \cup Y})$ with respect to $x_3$, i.e., to $\mbox{gin}(\mc{I}_{\Ga})$, which violates \eqref{h1la4}.
\end{proof}

{\fl Our} next task is to establish an upper bound on the dimension of the space $\mc{L}$ of hypercubics containing a nondegenerate rational curve $C$, assuming as usual that $C$ lies on a general quintic. In particular, this means that $C$ lies on no hyperquadric.
\beg{claim}\label{h07}
Either
\beg{equation}\label{lastest}
h^0(\mc{I}_C(3)) \leq 7,
\end{equation}
or $C$ is nonproblematic.
\end{claim}

\beg{proof}[Proof of Claim~\ref{h07}.]
Since $C$ lies on no hyperquadric, by assumption, the long exact sequence in cohomology obtained from the natural exact sequence of sheaves
\[
0 \ra \mc{I}_{C \cup Y}(2) \ra \mc{I}_{C \cup Y}(3) \ra \mc{I}_{\wt{\Ga}/H} \ra 0
\]
yields $h^0(\mc{I}_{C \cup Y}(3)) \leq h^0(\mc{I}_{\wt{\Ga}/H}(3))$. On the other hand, we have 
\beg{equation}\label{basicid}
h^0(\mc{I}_C(3))= h^0(\mc{I}_{C \cup Y}(3))
\end{equation}
since $C \cup Y \sub \mbox{Bs}(\mc{L})$. Whence,
\beg{equation}\label{basicineq}
h^0(\mc{I}_C(3)) \leq h^0(\mc{I}_{\wt{\Ga}/H}(3)).
\end{equation}
Now assume that $h^0(\mc{I}_C(3)) \geq 8$. Then \eqref{basicineq} yields 
\beg{equation}\label{h0Ga}
h^0(\mc{I}_{\wt{\Ga}/H}(3)) \geq 8.
\end{equation}
The latter inequality, together with \eqref{hs1}, forces the nonreduced locus $Y$ to be rather small. More precisely, letting $m:=\deg(Y)$:
\beg{claim}\label{mleq4}
We have $m \leq 4$. 
\end{claim}

\beg{proof}[Proof of Claim~\ref{mleq4}.] The argument naturally divides into two cases, depending on the number of linearly independent quadrics containing $\wt{\Ga}$.
{\fl \bf Case: $h^0(\mc{I}_{\wt{\Ga}/H}(2))=0$.} From \eqref{hs1}, we deduce that
\beg{equation}\label{CBid1}
h^1(\mc{I}_{\wt{\Ga}/H}(4))=h^0(\mc{I}_{\Ga^{\pr}}(1)).
\end{equation}
Since $\Ga^{\pr}$, the generic hyperplane section of $Y^{\pr}$, is reduced, we have $h^0(\mc{I}_{\Ga^{\pr}}(1)) \leq 1$ unless $\Ga^{\pr}$ is a point. In fact, the latter possibility is impossible, as it would imply that
\[
0=h^1(\mc{I}_{\Ga^{\pr}}(1))= h^0(\mc{I}_{\wt{\Ga}/H}(4)),
\]
which is clearly absurd.
Whence, \eqref{CBid1} implies that
\beg{equation}\label{h1Ga}
h^1(\mc{I}_{\wt{\Ga}/H}(4)) \leq 1.
\end{equation}
Now recall that $\deg(\wt{\Ga})$ (resp., $h^1(\mc{I}_{\wt{\Ga}/H}(4))$) computes the number of $\La$-rewritings (resp., the number of $\La$-rewritings in degrees at least 5) applied in forming the tree $T(\mbox{gin}(\mc{I}_{\wt{\Ga}/H}))$ of minimal generators. Borel-fixity, $h^0(\mc{I}_{\wt{\Ga}/H}(2))=0$, and \eqref{h1Ga} now force $\deg(\wt{\Ga}) \leq 15$, with equality if and only if
\[
\mbox{gin}(\mc{I}_{\wt{\Ga}/H})= \mbox{Borel}(x_2^6,x_1x_2^4,x_1^2x_2,x_0x_2^2).
\]
The claim follows immediately in this case.
{\bf \fl Case: $h^0(\mc{I}_{\wt{\Ga}/H}(2)) \neq 0$.} For simplicity, begin by considering the case $h^0(\mc{I}_{\wt{\Ga}/H}(2))=1$. By Borel-fixity, $T(\mbox{gin}(\mc{I}_{\wt{\Ga}/H}))$ is obtained via $m+1$ $\La$-rewritings from the tree of minimal generators of the degree-10 ideal $\mbox{Borel}(x_2^3,x_0^2)$. The fact that $h^0(\mc{I}_{\wt{\Ga}/H}(3)) \geq 8$ implies that at least $m-2$ of these rewritings occur in degrees at least 4, i.e., that
\beg{equation}\label{m-2}
h^1(\mc{I}_{\wt{\Ga}/H}(3)) \geq m-2.
\end{equation}
Applying \eqref{hs1}, we deduce that
$h^0(\mc{I}_{\Ga^{\pr}/H}(2)) \geq m-2$. In particular, if $h^0(\mc{I}_{\Ga^{\pr}/H}(2)) \leq 2$, then $m \leq 4$. The remaining possibility is that $h^0(\mc{I}_{\Ga^{\pr}/H}(2))=3$, in which case any three linearly independent quadrics containing $\Ga^{\pr}$ define a twisted cubic or degeneration thereof. By Borel-fixity, it follows that $x_0^2$, $x_0x_1$, and $x_1^2$ are minimal generators of $\mbox{gin}(\mc{I}_{\Ga^{\pr}/H})$. On the other hand, \eqref{hs1}, together with the fact that $\wt{\Ga}$ is linearly nondegenerate, implies that
\beg{equation}\label{Gaprime4}
h^1(\mc{I}_{\Ga^{\pr}/H}(4))=0.
\end{equation}
As a result of \eqref{Gaprime4}, $\mbox{gin}(\mc{I}_{\Ga^{\pr}/H})$ has no minimal generators of degree 6 or higher. On the other hand, \eqref{hs1}, together with the fact that $h^0(\mc{I}_{\wt{\Ga}/H}(3)) \geq 8$, implies that
\beg{equation}\label{Gaprime2}
h^1(\mc{I}_{\Ga^{\pr}/H}(2)) \geq 5,
\end{equation}
i.e., at least five $\La$-rewritings in degrees at least 5 are applied in forming $T(\mbox{gin}(\mc{I}_{\Ga^{\pr}/H}))$. It now follows that $\mbox{gin}(\mc{I}_{\Ga^{\pr}/H})$ is either
\[
\mbox{Borel}(x_2^5,x_1x_2^4,x_1^2,x_0x_2^3,x_0x_1), \text{ or } \mbox{Borel}(x_2^5,x_1x_2^4,x_1^2,x_0x_2^4,x_0x_1).
\]
These two ideals are of degrees 14 and 15, respectively. In fact, the second possibility is excluded, since in that case we have $\deg(\wt{\Ga})=12$, which is too small. In any case, we deduce that $m \leq 4$ whenever $h^0(\mc{I}_{\wt{\Ga}/H}(2))=1$.

{\fl Finally}, assume $h^0(\mc{I}_{\wt{\Ga}/H}(2)) =j \geq 2$. Borel-fixity implies that $T(\mbox{gin}(\mc{I}_{\wt{\Ga}/H}))$ is obtained via at least $m+j$ $\La$-rewritings from the tree of a 3-regular ideal whose degree-2 graded piece agrees with that of $\mbox{gin}(\mc{I}_{\wt{\Ga}/H})$. The fact that $h^0(\mc{I}_{\wt{\Ga}/H}(3)) \geq 8$ implies that at least $m-2$ of these rewritings occur in degrees at least 4. We now conclude, exactly as before, that $m \leq 4$.
\end{proof}

{\fl From} Claim~\ref{mleq4}, it follows that the nonreduced scheme $Y$ is supported along either a line or a conic. If $Y$ is supported along a conic, then it is necessarily the square of a plane conic, which has genus 3. The remaining possibilities for $Y$ are stratified according to embedding dimension. If $\mbox{emb. dim.}(Y)=4$, the ideal of the generic hyperplane section $\Ga^*= Y \cap H$ has no linear minimal generators; consequently,
\[
\mbox{gin}(\mc{I}_{\Ga^*/H})= \mbox{Borel}(x_2^2).
\]
It follows that $g(Y) \leq 0$ in this case. 

{\fl The} remaining possibilities for $Y$ are as follows \cite{Hart}. If $\mbox{emb. dim.}(Y)=3$, then $g \leq \fr{(m-2)(m-3)}{2}$; in particular, we have $g \leq 0$ unless $m=4$ and $g=1$. On the other hand, if $\mbox{emb. dim.}(Y)=2$, then $g(Y) \leq 3$; when $m \leq 3$, we have $g(Y) \leq 1$.

{\fl Since} the basis of our argument is the basic identity \eqref{basicid}, which uses only the fact that $C$ and $Y$ are components of $\mbox{Bs}(\mc{L})$, we may assume that no components of $Y^{\pr}$ belong to $\mbox{Bs}(\mc{L})$. By Bezout's theorem, we have
\beg{equation}\label{48-3m}
\deg(C \cap Y^{\pr}) \leq 48-3m;
\end{equation}
otherwise, every hypercubic containing $C$ contains a component of $Y^{\pr}$.

{\fl Comparing} $\wt{C}$ with its residual $Y^{\pr}$ using \cite{Na}, we now find
\beg{equation}\label{liaison}
2g(\wt{C})-2= 44+4m- \deg(\wt{C} \cap Y^{\pr}).
\end{equation}
Combining \eqref{liaison} with \eqref{48-3m}, we deduce
\beg{equation}\label{g(wtC)}
g(\wt{C}) \geq \fr{7}{2}m-1.
\end{equation}

{\fl On} the other hand, the holomorphic Euler characteristics $\chi$ of $C$, $Y$, and $\wt{C}= C \cup Y$ are related by
\beg{equation}\label{eulerchar}
\beg{split}
\chi(C \cap Y)&= \chi(C)+ \chi(Y)- \chi(\wt{C}), \text{ i.e.,} \\
\deg(C \cap Y)&= g(\wt{C})- g(C)- g(Y) +1.
\end{split}
\end{equation}
Together, \eqref{g(wtC)} and \eqref{eulerchar} imply
\beg{equation}\label{degest}
\deg(C \cap Y) \geq \fr{7}{2}m-g-g(Y).
\end{equation}

{\fl We} now apply \eqref{degest} to estimate the codimension of the subspace of $M_{11}$ determined by the corresponding rational curves $C$. Here we may assume $g \leq 2$, since otherwise $C$ is nonproblematic, by the analysis of Section~\ref{beginofproof}. To obtain our codimension estimate, we use the following fact, which refines the standard description of the tangent space to $M_d$ at an unobstructed point $f$.

{\bf \fl Key fact (\cite[p. $45$]{D}).} Let $B \sub \mb{P}^n$ denote a curve, and let $D_1 \in \mbox{Sym}^{\ga} \mb{P}^1$ and
$D_2 \in \mbox{Sym}^{\ga} B$ denote divisors of equal degree $\gamma$. The subspace of $M_d^n$ determined by morphisms
$f: \mb{P}^1 \rt B$ with $f^*(D_2)=D_1$ has dimension
$h^0(f^*T_{\mb{P}^n} \otimes \mc{I}_{D_1})$ whenever
$h^1(f^*T_{\mb{P}^n} \otimes \mc{I}_{D_1})=0$. 

{\fl In} our case, we may further assume that every degree-11 morphism $f: \mb{P}^1 \ra \mb{P}^4$ under consideration has RTB splitting type $(4,3,2,2)$, corresponding to the codimension-2 RTB stratum. Indeed, elements of the generic RTB stratum $(3,3,3,2)$ are 6-regular, while more special strata are of codimension $\geq 4$; our uniform estimate $i \leq 3$ implies that the corresponding rational curves are nonproblematic. Consequently, we have
\beg{equation}\label{ressplitting}
f^*T_{\mb{P}^4} \cong \mc{O}_{\mb{P}^1}(15) \oplus \mc{O}_{\mb{P}^1}(14) \oplus \oplus \mc{O}_{\mb{P}^1}(13)^{\oplus 2}.
\end{equation}

\beg{itemize}
\item{\bf Case: $m=2$.} Since $g \leq 2$ and $g(Y) \leq 0$, the degree estimate \eqref{degest} implies
\[
\deg(C \cap Y) \geq 5.
\]
For the sake of codimension estimation, we may assume the inequality is in fact an equality. Consequently, $C \cap Y= \sum_i m_i p_i$ is supported in a collection of points $p_i$ along $L= Y_{\mbox{red}}$, with total multiplicity $\sum_i m_i=5$. Now fix a choice of line $L \sub \mb{P}^4$, points $\{p_i\}$, and targets $f(p_i) \in L$. The key fact cited above, together with \eqref{ressplitting}, implies that we obtain $4 \cdot 5=20$ linearly independent conditions. Since lines vary in a 6-dimensional family, while $\{p_i\}$ and $\{f(p_i)\}$ each vary in at most 5-dimensional families (corresponding to symmetric squares of $\mb{P}^1$ and $L$, respectively), we deduce that the corresponding rational curves determine a subspace of $M_{11}$ of codimension at least 4. Note that this argument recovers the result of Lemma~\ref{1a}.

\item {\bf Case: $m=3$.} Here $g \leq 2$ and $g(Y) \leq 1$. Correspondingly, \eqref{degest} implies
\[
\deg(C \cap Y) \geq 8.
\]
As in the preceding subcase, it suffices to treat the case $\deg(C \cap Y) = 8$. The corresponding rational curves determine a subspace of $M_{11}$ of codimension at least $2(8)-6=10$.

\item {\bf Case: $m=4$.} Here $g \leq 2$ and $g(Y) \leq 3$, so \eqref{degest} implies
\[
\deg(C \cap Y) \geq 9.
\]
Without loss of generality, we may assume $\deg(C \cap Y) = 9$. Whence, $C \cap Y= \sum_i m_i p_i$ is supported in a collection of points $p_i$ along $Y_{\mbox{red}}$, with total multiplicity $\sum_i m_i=9$. Here $Y_{\mbox{red}}$ is either a line, a conic, or a pair of intersecting lines. In the first case, the same enumeration used in the first two subcases shows that the corresponding $C$ determine a subspace of $M_{11}$ of codimension at least 12. Similarly, in the second case, fixing a choice of conic in $\mb{P}^4$, points $p_i$, and their targets $f(p_i)$, we obtain 36 linearly independent conditions. Conics in $\mb{P}^4$ vary in a 9-dimensional family; accordingly, the corresponding subspace of $M_{11}$ is of codimension at least 9. Finally, in the third case, we use the fact that pairs of intersecting lines in $\mb{P}^4$ determine an 11-dimensional family to deduce that the corresponding subspace of $M_{11}$ is of codimension at least 7.
\end{itemize}
\end{proof}

{\fl Our} next task is to show that we can improve our upper estimate on $i$ by applying Claim~\ref{key inference}, in tandem with \eqref{lastest}.

\beg{claim}\label{improved i-estimate}
Let $C \sub \mb{P}^4$ denote a nondegenerate rational curve of degree 11, for which $g(C) \leq 2$, $\mc{I}_C$ is 7-regular, $C$ lies on no hyperquadric, and $5 \leq h^0(\mc{I}_C(3)) \leq 7$. Then $i \leq 2$.
\end{claim}
\beg{proof}[Proof of Claim~\ref{improved i-estimate}.]
As usual, we argue case-by-case, based on the possibilities for $\mbox{gin}(\mc{I}_{\Ga})$.
\beg{itemize}
\item {\bf Case 1:} $\mbox{gin}(\mc{I}_{\Gamma})=\mbox{Borel}(x_2^4,x_1x_2^2)$. Here $g+i \leq 8$. Consider possible curve gins $I$ obtained by applying 6 $C$-rewritings to $\mbox{gin}(\mc{I}_{\Gamma})=\mbox{Borel}(x_2^4,x_1x_2^2)$. Since at most 3 of these may occur in degrees at least 7, any such ideal has $i \leq 3$, with equality if and only if
\[
I= \mbox{Borel}(x_2^4x_3^3,x_1x_2^2).
\]
However, the latter ideal is in fact disallowed, by the key inference, since it is obtained by applying a rewriting rule in degree 6 to a polynomial in $x_0$, $x_1$, and $x_2$ alone. An easy inspection shows that admissible curve gins all satisfy $i \leq 1$.

\item {\bf Case 2:} Either
\beg{itemize}
\item $\mbox{gin}(\mc{I}_{\Ga})=\mbox{Borel}(x_2^4,x_1^2x_2,x_0^2)$,
or
\item $\mbox{gin}(\mc{I}_{\Ga})= \mbox{Borel}(x_2^5,x_1x_2^2,x_0^2)$.
\end{itemize}
To maximize $i$, we assume $g=0$. In the first subcase, $g(C_{\Ga})=9$. We begin by applying a $C$-rewriting at the vertex corresponding to the unique quadratic generator of $\mbox{gin}(\mc{I}_{\Ga})$, since curves that lie on hyperquadrics may be ignored. Likewise, since the resulting ideal contains 9 cubic minimal generators, we apply 2 more $C$-rewriting rules in degree 3. After applying 5 more $C$-rewriting rules (for a total of 8), we have rewritten at most twice in degree 6. For example, a typical ideal arising at this stage is
\[
I= \mbox{Borel}(x_0^2x_3,x_0x_2^2,x_1^2x_2x_3,x_1x_2^3,x_2^6,x_2^4x_3^3),
\]
which defines a scheme with $g=1$ and $i=2$.
We only have 1 more $C$-rewriting at our disposal, which is necessarily in degree less than 6, by the key inference. So $i \leq 2$.

\medskip
In the second subcase, $g(C_{\Ga})=10$. As in the first subcase, we begin by applying a $C$-rewriting at the vertex corresponding to the unique quadratic generator of $\mbox{gin}(\mc{I}_C)$. Likewise, since the resulting ideal contains 10 cubic minimal generators, we apply 3 more $C$-rewriting rules in degree 3. After applying 2 more $C$-rewriting rules (for a total of 6), we have rewritten at most once in degree 6. (This is possible because $\mbox{gin}(\mc{I}_{\Gamma})$ has a minimal generator in degree 5.) Applying the key inference, we find that after applying 3 more $C$-rewriting rules, we have rewritten at most twice in degree 6. We only have 1 more $C$-rewriting at our disposal, which is necessarily in degree less than 6, by the key inference. So $i \leq 2$ once more.

\item {\bf Case 3: } Either
\beg{itemize} 
\item $\mbox{gin}(\mc{I}_{\Ga})=\mbox{Borel
}(x_0x_1,x_0x_2^3,x_1^2x_2,x_2^4)$,
\item $\mbox{gin}(\mc{I}_{\Ga})=\mbox{Borel
}(x_0x_1,x_0x_2^2,x_1^3,x_2^4)$,
\item $\mbox{gin}(\mc{I}_{\Ga})=\mbox{Borel
}(x_0x_1,x_0x_2^2,x_1^2x_2,x_1x_2^3,x_2^5)$,
or
\item $\mbox{gin}(\mc{I}_{\Ga})=\mbox{Borel
}(x_0x_1,x_0x_2^3,x_1x_2^2,x_2^5)$.
\end{itemize}
In the first two (resp., last two) subcases, we have $g(C_{\Ga})=10$ (resp., $g(C_{\Ga})=11$).
Assume $g=0$. In each subcase, we begin by applying 2 $C$-rewritings at those vertices corresponding to the quadratic minimal generators of $\mbox{gin}(\mc{I}_{\Ga})$.

In the first two subcases, the resulting ideal has 9 cubic minimal generators. We apply $C$-rewritings to two of those in order to satisfy \eqref{lastest}. We now have 6 $C$-rewritings at our disposal; after 5 of these, we have rewritten at most twice in degree 6. A typical example of an ideal arising at this stage is
\[
I= \mbox{Borel}(x_0x_1x_3,x_0x_2^3,x_1^2x_2x_3,x_1x_2^3x_3,x_2^4x_3^3,x_2^6),
\]
for which $g=1$ and $i=2$. We only have 1 more $C$-rewriting at our disposal, which necessarily occurs in degree less than 6, by the key inference. So $i \leq 2$ in the first two subcases.

Similarly, in the third and fourth subcases, after applying rewritings at the quadratic generators, the resulting ideal has 10 minimal cubic generators. We apply $C$-rewritings to three of those in order to satisfy \eqref{lastest}. We now have 6 $C$-rewritings at our disposal, of which 2 may occur in degree 6, by the key inference. We deduce that $i \leq 2$. A typical example of an ideal that might arise is
\[
I= \mbox{Borel}(x_0x_1x_3,x_0x_2^2x_3,x_1^2x_2x_3,x_1x_2^4x_3,x_1x_2^3x_3^3,x_2^6,x_2^5x_3^2).
\]

\item {\bf Case 4: } 
Either
\beg{itemize}
\item $\mbox{gin}(\mc{I}_{\Ga})= \mbox{Borel}(x_0x_2,x_2^4)$, in which case $g(C_{\Ga})=11$; 
\item $\mbox{gin}(\mc{I}_{\Ga})= \mbox{Borel}(x_2^5,x_1x_2^3,x_1^3,x_0x_2)$, in which case $g(C_{\Ga})=13$; 
\item  $\mbox{gin}(\mc{I}_{\Ga})=\mbox{Borel}(x_2^5,x_1^2x_2,x_0x_2)$, in which case $g(C_{\Ga})=12$.
\end{itemize}
Assume $g=0$. In each subcase, we begin by applying 3 $C$-rewritings at those vertices corresponding to the quadratic minimal generators of $\mbox{gin}(\mc{I}_{\Ga})$.

In the first subcase, the resulting ideal has 9 minimal cubic generators. We apply $C$-rewritings to two of those in order to satisfy \eqref{lastest}. We now have 6 $C$-rewritings at our disposal, of which 2 may occur in degree 6, by the key inference. Whence, $i \leq 2$. A typical example of an ideal that might arise is
\[
I= \mbox{Borel}(x_0x_1x_3,x_0x_2x_3^2,x_1^2x_2^2,x_1x_2^3x_3,x_2^6,x_2^5x_3^2,x_2^4,x_3^3).
\]

In the second subcase, the ideal that is the result of applying 3 $C$-rewritings to the quadratic minimal generators of $\mbox{gin}(\mc{I}_{\Ga})$ contains 10 minimal cubic generators. We apply $C$-rewritings to three of those in order to satisfy \eqref{lastest}, leaving us with 7 $C$-rewritings at our disposal. Of those, at most two may occur in degree 6, by the key inference. Whence, $i \leq 2$. A typical example of an ideal that might arise is
\[
I= \mbox{Borel}(x_0x_1x_3,x_0x_2x_3^2,x_1^3x_3,x_1^2x_2^3,x_1^2x_2^2x_3^2,x_1x_2^4,x_1x_2^3x_3^3,x_2^6,x_2^5x_3^2).
\]

Finally, in the third subcase, the ideal that is the result of applying 3 $C$-rewritings to the quadratic minimal generators of $\mbox{gin}(\mc{I}_{\Ga})$ contains 11 minimal cubic generators. We apply $C$-rewritings to four of those in order to satisfy \eqref{lastest}, leaving us with 5 $C$-rewritings at our disposal. Of those, at most two may occur in degree 6, by the key inference. Whence, $i \leq 2$. A typical example of an ideal that might arise is
\[
I= \mbox{Borel}(x_0x_1x_3,x_0x_2x_3^2,x_1^3x_3,x_1^2x_2^2,x_1^2x_2x_3^4,x_1x_2^4,x_2^6,x_2^5x_3^2).
\]
\end{itemize}
\end{proof}
Because $i \leq 2$ in every case, it follows that $C$ is nonproblematic whenever $g=0$. On the other hand, if $i>0$, then the restricted tangent bundle $f^* \mc{T}_{\mb{P}^4}$ of the map $f: \mb{P}^1 \ra \mb{P}^4$ with image $C$ necessarily splits nongenerically. The proof of Theorem~\ref{codlemma}, sketched in Section~\ref{sec-five} and given in detail in the earlier preprint \cite{Cot2}, shows that {\it those maps whose images have arithmetic genus $g$ determine a subspace of codimension at least $\min(2g,12)$ within each RTB stratum}. In particular, if $g>0$ and $i>0$, then $C$ belongs to a subspace of codimension at least $(2+ 2g)$, which is greater than $(g+i)$ whenever $i \leq 2$. So once more, we deduce that $C$ is nonproblematic.
\end{proof}

\subsection{Castelnuovo curves are nonproblematic}
In order to complete the proof of Theorem~\ref{1.1bis}, it remains, by Theorem~\ref{singest}, to show that nondegenerate {\it Castelnuovo curves}, i.e., those singular rational curves $C$ of maximum arithmetic genus $12$, are nonproblematic. Any such curve lies on a cubic scroll in $\mathbb{P}^4$. Cubic scrolls are of two basic types; those that are smooth will be denoted $S_{1,2}$, while those that are cones over twisted cubic curves will be denoted $S_{0,3}$. 

{\fl We} begin by considering those rational curves of degree $11$ lying on smooth cubic scrolls  $S_{1,2}$. Every $S_{1,2}$ may be realized as the image of the Hirzebruch surface $\mathbb{F}_1:= \mathbb{P}(\mathcal{O}_{\mathbb{P}^1} \oplus  \mathcal{O}_{\mathbb{P}^1}(1))$ under the map $\phi_{1,2}$ defined by the complete linear series $|\mathcal{O}_{\mathbb{F}_1}(e+2f)|$, where $e$ is the divisor class of the $-1$-curve on $\mathbb{F}_1$ and $f$ is the class of the fiber.

{\fl The} intersection pairing on $\mathbb{F}_1$ is given by
\begin{equation*}
e^2=-1, e \cdot f=1, f^2=0.
\end{equation*}
Moreover, the
canonical class of $\mathbb{F}_1$ is
\begin{equation*}
K_{\mathbb{F}_1}=-2e-3f.
\end{equation*}

{\fl Now} let $\phi_{1,2}^{\star}[C]=ae+bf \in \mbox{Pic }\mathbb{F}_1$. The adjunction formula implies
that the arithmetic genus $g$ of $C$ satisfies
\begin{equation}\label{adjunction_on_S_{1,2}}
2g-2=((a-2)e+(b-3)f)\cdot(ae+bf),
\end{equation}
i.e.,
\begin{equation}\label{S_{1,2}eq1}
a^2-2ab+a+2b+2g-2=0.
\end{equation}
Also, since $\deg(C)=11$, we have
\begin{equation*}
(e+2f) \cdot (ae+bf)=11,
\end{equation*}
or equivalently,
\begin{equation}\label{S_{1,2}eq2}
b=11-a.
\end{equation}
Substituting \eqref{S_{1,2}eq2} in \eqref{S_{1,2}eq1}, we obtain
\begin{equation}\label{S_{1,2}eq3}
3a^2-23a+20+2g=0.
\end{equation}
Substituting $g=12$ in \eqref{S_{1,2}eq3} we obtain $a=4$. Then \eqref{S_{1,2}eq2} implies $b=7$. To estimate the dimension of curves of class $4e+7f$ on $\mathbb{F}_1$, we proceed as follows.

{\fl First,} note that for any $n \geq 0$, the dimension of any component of the space of maps $\mb{P}^1 \ra \mb{F}_1$ with image of class $(ae+bf)$ that contains a given map $\psi$ is at most $h^0(\mc{N}_{\psi}/(\mc{N}_{\psi})_{\mbox{tors}}))$, where 
$\mc{N}_{\psi}$ is the normal sheaf of the map $\psi$ and $(\mc{N}_{\psi})_{\mbox{tors}}$ is its torsion subsheaf \cite[Lem. 3.41]{HM}. On the other hand, the short exact sequence
\[
0 \ra T_{\mb{P}^1} \ra \psi^* T_{\mb{F}^1} \ra \mc{N}_{\psi} \ra 0
\]
that defines the normal sheaf implies that
\[
\beg{split}
\deg(\mc{N}_{\psi}/(\mc{N}_{\psi})_{\mbox{tors}})) &\leq \deg(-\psi^* K_{\mb{F}_1}) -2 \\
& \leq (2e+3f) \cdot (ae+bf) - 2 \\
&= a+2b -2.
\end{split}
\]
In particular, when $(a,b)=(4,7)$, we have
\[
h^0(\mc{N}_{\psi}/(\mc{N}_{\psi})_{\mbox{tors}}))=17.
\]

{\fl On} the other hand, the dimension of the space of cubic scrolls $S_{1,2} \subset \mathbb{P}^4$ equals $h^0(\mathcal{O}_{\mathbb{F}_{1}}(e+2f))-1$, while the dimension of the space of curves of class $ae+bf$ on a given scroll equals $h^0(\mathcal{O}_{\mathbb{F}_{1}}(ae+bf))-1$.

{\fl Since} $\mathbb{F}_{1}$
is rational, we have $\chi(O_{\mathbb{F}_{1}})=1$. Therefore, Riemann-Roch implies that
\begin{equation}
\begin{split}
\chi(ae+bf)&=1+\frac{1}{2}((ae+bf)^{2}-(ae+bf)\cdot(-2e-3f)). \\
&=-\frac{1}{2}a^{2}+ab+ \frac{1}{2}a+ b+1.
\end{split} \notag
\end{equation}
Substituting $(a,b)=(1,2)$ yields
\begin{equation*}
\chi(e+2f)=5.
\end{equation*}

{\fl A} straightforward calculation (see \cite[Section $2$]{Cot} for details) now yields
\begin{equation*}
h^i(\mathcal{O}_{\mathbb{F}_{1}}(ae+bf))=0, i=1,2,
\end{equation*}
when $(a,b)=(1,2)$. On the other hand, every Castelnuovo curve is ACM, hence its ideal sheaf has vanishing higher cohomology. It follows immediately that curves of class $4e+7f$ on $\mathbb{F}_1$ are nonproblematic.

{\fl Finally,} we count curves $C$ lying on cones $S_{0,3}$ over twisted
cubics. To do so, note first that every $S_{0,3}$ is the image of the Hirzebruch
surface $F_3=\mathbb{P}(\mathcal{O}_{\mathbb{P}^1} \oplus \mathcal{O}_{\mathbb{P}^1}(3))$
under the map $\phi_{0,3}$ defined by the complete linear series $|\mc{O}_{\mb{F}_3}(e+3f)|$, where $e$ is the
divisor class of the  of $-3$-curve on $\mb{F}_3$ and $f$ is the class of the fiber. Note that $\phi_{0,3}$ is a
birational map that blows down the $-3$-curve to the vertex
of $S_{0,3}$.

{\fl The} intersection pairing on $\mb{F}_3$ is given by
\begin{equation*}
e^2=-3, e \cdot f=1, f^2=0.
\end{equation*}
The
canonical class of $\mb{F}_3$ is
\begin{equation*}
K_{F_3}=-2e-5f.
\end{equation*}

{\fl Now} say that $C$
avoids the vertex $S_{0,3}$. Then
$[\phi_{0,3}^{-1}C]=ae+bf$, for some integers $a$ and $b$. Since $\deg(C)=11$, we have
\begin{equation*}
(e+3f) \cdot (ae+bf)=11,
\end{equation*}
whence,
\begin{equation*}
b=11.
\end{equation*}

{\fl On} the other hand, the adjunction formula
then implies that
\begin{equation}\label{S_{0,3}eq2}
2g-2=((a-2)e+(b-5)f)\cdot (ae+bf).
\end{equation}
Substituting $g=12$ and $b=11$ in \eqref{S_{0,3}eq2} and solving for $a$ yields $(a,b)=(4,11)$. But
\[
(4e+11f) \cdot e=-1,
\]
which is only possible if $C$ and the $(-3)$-curve on $\mb{F}_3$ have components in common, which is absurd.

{\fl Finally,} say that $C$ passes through the vertex of $S_{0,3}$. The proper transform of $C$ on $\mb{F}_3$ has class
\[
\widetilde{a}e+11f
\]
for some nonnegative integer $\widetilde{a}$. There is a natural morphism $\phi$ from the space of stable maps $\mc{M}_0(\mb{F}_3)$ to the space of stable maps $\mc{M}_0(S_{0,3})$ induced by contraction of the exceptional curve on $\mb{F}_3$. In particular, every map $\psi: \mb{P}^1 \ra S_{0,3}$ whose image passes through the vertex of $S_{0,3}$ lifts via $\phi$ to a map $\widetilde{\psi}: \mb{P}^1 \ra \mb{F}_3$ whose image is the proper transform of $\psi(\mb{P}^1)$.

Here
\[
\beg{split}
\deg(\mc{N}_{\widetilde{\psi}}/(\mc{N}_{\wt{\psi}})_{\mbox{tors}})) &\leq \deg(-\wt{\psi}^* K_{\mb{F}_3}) -2 \\
& \leq (2e+5f) \cdot (\widetilde{a}e+11f) - 2 \\
&= 22 -\widetilde{a}-2.
\end{split}
\]

We conclude immediately that $\psi$ is nonproblematic.

\section{Curves spanning hyperplanes}\label{sec-three}
We now turn our attention to rational curves $C$ whose linear spans
are $3$-dimensional hyperplanes $H \subset \mb{P}^4$. By \eqref{1.2b} of Theorem~\ref{1.1},
in order to prove Theorem~\ref{1.1}, we must show the following
result.

\begin{thm}\label{2.3}
The reduced irreducible rational curves verifying
\[
g+i \geq 8+\min(g,5)
\]
determine a sublocus of $M_{11}$ of codimension greater than $g+i$.
\end{thm}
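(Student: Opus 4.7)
The plan is to follow the degenerate-case argument of \cite[Thm.~2.3]{Cot}, adapted to degree $11$. Because $C$ spans a hyperplane $H \cong \mb{P}^3$, the sequence $0 \ra \mc{I}_H(5) \ra \mc{I}_C(5) \ra \mc{I}_{C/H}(5) \ra 0$, together with $h^1(\mc{O}_{\mb{P}^4}(4))=0$, identifies $i=h^1(\mc{I}_C(5))$ with $h^1(\mc{I}_{C/H}(5))$, so the entire analysis may be carried out intrinsically in $\mb{P}^3$. Moreover, by Theorem~\ref{singest} the stratum $M^{3,g}_{11}$ already has codimension at least $8+\min(g,5)$ in $M_{11}$, so it suffices to produce an additional codimension of $g+i-(8+\min(g,5))+1$ whenever the hypothesis is nontrivial.

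First I would classify the possible $\mbox{gin}(\mc{I}_\Ga)$, where $\Ga$ is a generic $\mb{P}^2$-section of $C$ --- that is, a set of $11$ points in uniform position in $\mb{P}^2$. By Fact~\ref{1.3} the corresponding ideal is $6$-regular, and a combinatorial enumeration using the $\La$-rules (restricted to $x_0, x_1$) yields a finite list, precisely as in the proof of Theorem~\ref{classif_hyp_gins}. For each such hyperplane gin, I would then enumerate the possible curve gins $\mbox{gin}(\mc{I}_C)$ obtained by applying the three-variable analogues of the $C$-rewriting rules of Table~\ref{Table $2$}, and extract explicit upper bounds on $g+i$ in the spirit of Theorem~\ref{coh_bounds}. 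For most hyperplane gins the resulting bound on $g+i$ is strictly less than $8+\min(g,5)$, making the hypothesis vacuous and so contributing nothing.

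The remaining hyperplane gins force $C$ onto very special surfaces in $\mb{P}^3$ --- most notably a reduced irreducible quadric (either $\mb{F}_0 = \mb{P}^1 \times \mb{P}^1$ or a quadric cone $\mb{F}_2$), or an ACM cubic surface. For each such class I would bound the codimension of the corresponding stratum in $M_{11}$ by a direct dimension count on the relevant Hirzebruch surface, in close analogy with the Castelnuovo analysis at the end of Section~\ref{sec-two}. Curves of bidegree $(a,b)$ with $a+b=11$ on $\mb{F}_0$ have genus $(a-1)(b-1)$ and move in an $((a+1)(b+1)-1)$-dimensional linear system; combining this with the $9$ parameters of quadrics in $H$, the $4$ parameters of $H \sub \mb{P}^4$, and the $\mbox{PGL}(2)$-action on $\mb{P}^1$, one obtains the codimension of the stratum, and an analogous calculation on $\mb{F}_2$ handles the quadric-cone case. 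Where the surface-based codimension by itself falls short, an additional restriction on the RTB splitting type, as in Case 1 of Lemma~\ref{lem 2.3}, should supply the remaining dimensions via Verdier's theorem.

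The principal obstacle will be combinatorial bookkeeping, exactly as in Case 4 of Lemma~\ref{lem 2.3}: verifying that the sum of codimensions --- from $M^{3,g}_{11}$, from specialty of the ambient surface, and from any RTB restriction --- strictly exceeds $g+i$ in every admissible $(a,b)$ and every $i$ compatible with the gin analysis. One does not expect genuinely new geometric input beyond that used in the $r=4$ argument, but the marginal cases, in particular curves of low bidegree with unusually large cohomology on a quadric cone, will require individual checking.
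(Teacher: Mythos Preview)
Your reduction to $h^1(\mc{I}_{C/H}(5))$ and the use of Theorem~\ref{singest} are correct, but the geometric strategy that follows has a genuine gap. The claim that the residual hyperplane gins force $C$ onto a quadric in $H\cong\mb{P}^3$ is false. Using the Gruson--Peskine constraint $\lambda_i-2\le\lambda_{i+1}\le\lambda_i-1$ of \cite{GP} (which you do not invoke, and without which the classification of $\mbox{gin}(\mc{I}_\Ga)$ is much looser), there are exactly two admissible hyperplane gins, $\mbox{Borel}(x_1^5,x_0x_1^3)$ and $\mbox{Borel}(x_1^5,x_0^2x_1^2,x_0^3)$, and \emph{neither} has a quadratic generator. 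Consequently no curve gin under consideration has a degree-$2$ minimal generator, and curves on quadrics simply do not occur; the paper notes this explicitly. Your planned $\mb{F}_0$/$\mb{F}_2$ bidegree counts are therefore beside the point, and there is no evident mechanism in your outline for producing the extra codimension you need.

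The paper's argument is structured quite differently. After the two-gin classification it runs a dichotomy on $9$-regularity. If $C$ is $9$-irregular, then by \cite{GLP} it has a $10$-secant line, giving codimension $\ge 14$ in $M_{11}$, while the connectedness theorem of \cite{AT} for gins of irreducible space curves forces $g+i<14$. If $C$ is $9$-regular, the same connectedness theorem yields $g+i\le 12$, and RTB/\cite{GLP} considerations show that $8$-irregular curves already sit in codimension $\ge 11$. The remaining issue is to improve to $g+i\le 10$ by showing that curves lying on at least three linearly independent \emph{quartics} are nonproblematic; this is handled by a liaison argument inside a $(4,4)$ complete intersection in $H$, parallel to the $(3,3,3)$ analysis of Section~\ref{ci333}, together with a separate treatment of (possibly singular) cubic surfaces via $\mbox{Bl}_{6\ \mathrm{pts}}\mb{P}^2$ and adjunction. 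The two ingredients your outline lacks --- the connectedness theorem of \cite{AT} and the $(4,4)$-liaison step --- are what actually carry the weight here, and neither is replaced by a Hirzebruch-surface dimension count.
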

Much as in the nondegenerate case, we will call $C$ {\bf nonproblematic} if either
\beg{itemize}
\item[(i)] $C$ belongs to a sublocus of $M_{11}$ of codimension greater than $g+i$, or
\item[(ii)] $g+i < 8+\min(g,5)$.
\end{itemize}

\beg{proof}
We argue much as we did in our analysis of
nondegenerate curves. The new analysis is simpler, so we sketch it.

Fix $C$, and let $H$ denote the linear span of $C$. As in
\cite[Section $2.5$]{Cot}, choosing a $2$-plane $H_1 \sub H$ that is general
with respect to $C$ and choosing coordinates $x_0,\dots,x_4$ for $\mb{P}^4$
in such a way that $H$ and $H_1$ are defined by $x_4=0$ and $x_3=0$, respectively, we obtain $\mbox{gin}(\mc{I}_{C
  \cap H_1/H_1})$ of $C$, the hyperplane gin of $C$. For ease of notation, we'll denote the latter by
$\mbox{gin}(\mc{I}_{\Ga})$. The hyperplane gin is a monomial ideal in
$x_0$ and $x_1$, so it has a minimal generating set of the form
\[
(x_0^{k}, x_0^{k-1}x_1^{\lambda_{k-1}}, \dots,
  x_0x_1^{\lambda_{1}},x_1^{\lambda_{0}}). \notag
\]

By a result of Gruson and Peskine ~\cite[Cor. 4.8,
p.160]{Gr}, the invariants $\lambda_i$ of the above generating set
verify
\beg{equation}\label{gp}
\lambda_i - 1 \geq \lambda_{i+1} \geq \lambda_i - 2
\end{equation}
for all $i = 0, \dots, k-2$.

Moreover, $\mbox{gin}(\mc{I}_{\Ga})$ is Borel-fixed, so it has a
unique tree-representation analogous to the tree-representations for
the hyperplane gins of nondegenerate curves in $\mb{P}^4$ introduced
in Section $1$. Each tree may be obtained from an empty tree with a
single vertex by applying a sequence of rules that we denote by
$\Lambda$-rules; see \cite[Table $3$]{Cot}. Likewise,
$\mbox{gin}(\mc{I}_{C/H})$ is a Borel-fixed ideal whose tree of minimal
generators is obtainable from the tree associated to
$\mbox{gin}(\mc{I}_{\Ga})$ by finitely many $C$-rules, as given in \cite[Table $4$]{Cot}.

A straightforward inductive argument on trees shows that $\sum \lambda_i$
equals $11$, the degree of $C$. Moreover, by Ballico's result
\cite{Ba}, $\mc{I}_{\Ga}$ is $6$-regular. These two constraints, along with \eqref{gp}, force
\[
\mbox{gin}(\mc{I}_{\Ga})=\mbox{Borel }(x_1^5,
x_1^3x_0) \text{ or } \mbox{gin}(\mc{I}_{\Ga})= \mbox{Borel }(x_1^5,x_0^2x_1^2,x_0^3).
\]
Let $g_{\Gamma}$ denote the genus of the cone in $H$ with vertex $(0,0,0,1)$ over
the zero-dimensional scheme defined by $\mc{I}_{\Ga}$ in $H_1$; a
calculation yields $g_{\Gamma}=14$ or $g_{\Gamma}=15$. It follows that
\begin{equation}\label{g+ibound2}
g+h^1(\mc{I}_{C/H}) \leq 14 \text{ or } g+h^1(\mc{I}_{C/H}) \leq 15,
\end{equation}
respectively.

On the other hand, the long exact sequence in cohomology associated to
\begin{equation*}
0 \rightarrow \mathcal{I}_{\mathbb{P}^4}(4) \rightarrow \mathcal{I}_{C/\mathbb{P}^4}(5)
\rightarrow \mathcal{I}_{C/H}(5) \rightarrow 0,
\end{equation*} 
shows that
\begin{equation*}
h^1(\mathcal{I}_{C/H}(5))=h^1(\mathcal{I}_{C/\mathbb{P}^4}(5)).
\end{equation*}

Applying a single $C$-rule to either of our possible hyperplane gins
yields a $6$-regular ideal, accordingly, our estimates
\eqref{g+ibound2} improve to
\[
g+i \leq 13 \text{ and } g+i \leq 14,
\]
respectively.

We now face a basic dichotomy: either 
\beg{enumerate}
\item $C$ is $9$-regular. Appealing to the RTB stratification of $M^3_{11}$, together with \cite[Prop. 1.2]{GLP}, we see that those elements of $M^3_{11}$ with 8-irregular images sweep out a sublocus of codimension at least 3, corresponding to RTB stratum $(5,4,2)$ and specializations thereof. In other words, such maps have  codimension at least 11 in $M_{11}$. On the other hand, inspection together with the connectedness result of \cite[Thm. $2.5$]{AT} yields $g+i \leq 12$ in every case. Moreover, our bound on $g+i$ will improve to $g+i \leq 10$, provided the following assertion holds: {\it those maps whose images lie along at least 3 linearly independent quartics are nonproblematic}. 

To prove the assertion, note that the images $C$ of such maps fall into two categories, namely:
\beg{enumerate}
\item Those that lie on irreducible reduced quadric or cubic surfaces.
\item Those that are contained in complete intersection curves of type $(4,4)$.
\end{enumerate}

In fact, curves that lie on quadrics are automatically precluded, because none of the curve gins under consideration admit any minimal generators of degree less than three. Similarly, if $C$ lies on a cubic surface, then its hyperplane gin is necessarily $\mbox{Borel}(x_1^5,x_0^2x_1^2,x_0^3)$, and $\mbox{gin}(\mc{I}_C)$ will contain a unique minimal cubic generator. Applying the connectedness result \cite[Thm. 2.5]{AT}, we deduce that $g+i \leq 9$, with equality possible only when $g=0$. In particular, $C$ is nonproblematic, except possibly when $g=0$.

Now recall from \cite[pp.628-629 and p.638]{GH} that there are three basic types of singular cubic surfaces: those with double lines, cones over nonsingular plane cubics, and cubics with isolated singularities that are not cones. Those with double lines arise as projections of smooth cubic scrolls in $\mb{P}^4$. Those that are cones are desingularized by $\mb{P}^1$-bundles over nonsingular plane cubics $E$.

Those cubic surfaces with isolated singularities that are not cones are desingularized by $\mb{P}^2$ blown up in six points in special position. Moreover, the desingularization 
\[
\widetilde{S}= \mbox{Bl}_{6 \text{ pts}} \mb{P}^2 \ra S
\]
is defined, in every case by the linear system $3l- \sum_{i=1}^6 E_i$ corresponding to (pullbacks of) plane cubics passing through the six points in question. Here $l$ is the class of a line pulled back from $\mb{P}^2$, while the classes $E_i$ are the exceptional divisors of the blow-up.

We already know that degree-11 rational curves that lie on smooth cubic scrolls in $\mb{P}^4$ are nonproblematic, so the same is true of their projected images.  On the other hand, if $C$ is a degree-11 rational curve on a cone over a nonsingular plane cubic, then the corresponding map $\mb{P}^1 \ra \mb{P}^4$ is necessarily a multiple cover of one of the rulings of the cone, which is absurd. Finally, to handle smooth rational curves $C$ that lie on images of $\mb{P}^2$ blown up in six points, we argue as follows.

Let 
\[
[C]= al+ \sum_{i=1}^6 b_i E_i;
\]
by adjunction, we have
\beg{equation}\label{res1}
-2= (K_{\widetilde{S}}+[C]) \cdot [C]= a(a-3) - \sum_{i=1}^6 (b_i+1)b_i.
\end{equation}
Moreover, since $C$ is a degree-11 curve on $S$, we have
\beg{equation}\label{res2}
(3l- \sum_{i=1}^6 E_i) \cdot (al+ \sum_{i=1}^6 b_i E_i)= 3a+ \sum_{i=1}^6 b_i=11.
\end{equation}
Further, the fact that
$[C] \cdot E_i \geq 0$ for all $i, 1 \leq i \leq 6$, implies that 
\beg{equation}\label{res3}
a \geq 4, \text{ and } b_i \leq 0
\end{equation}
for all $i, 1 \leq i \leq 6$.
Finally, the fact that $C$ is smooth implies that 
\beg{equation}\label{res4}
b_i \geq -1
\end{equation}
for all $i$. The four equations \eqref{res1}-\eqref{res4} have no common integer solutions, so we conclude that $C$ is nonproblematic.

To handle curves of the second type, say that $X$ is a complete intersection curve of type $(4,4)$, such that $X= C \cup C^{\pr}$, for some quintic curve $C^{\pr}$. By the Hartshorne--Schenzel theorem \cite[Thm. 5.3.1]{M}, we deduce that
\[
h^1(\mc{I}_{C/H}(t))= h^1(\mc{I}_{C^{\pr}/H}(4-t))
\]
for all nonnegative integers $t$. It follows, just as in our analysis of nondegenerate rational curves of degree 11 lying on five linearly independent hypercubics, that if $C$ is 6-irregular, then its residual $C^{\pr}$ contains a nonresidual subscheme $Y$. By Bertini's theorem, $Y$ lies in the base locus of the linear system $\mc{L}$ of quartics containing $C$; since $\mc{L}$ is at least 3-dimensional by assumption, it follows that $Y$ is properly contained in $C^{\pr}$. Whence, we have $m:=\deg(Y) \leq 4$. Now write $C^{\pr}= Y \cup Y^{\pr}$. On one hand, liaison yields
\beg{equation}\label{eq1}
\deg(C \cap C^{\pr})= 46-2g.
\end{equation}
On the other hand, Bezout's theorem implies that
\beg{equation}\label{eq2}
\deg(C \cap Y^{\pr}) \leq 20-4m;
\end{equation}
otherwise, every quartic containing $C$ contains a component of $Y^{\pr}$. Applying \eqref{eq1} and \eqref{eq2}, we deduce
\[
\deg(C \cap Y) \geq 26+ 4m-2g.
\]
Via an explicit dimension count analogous to the one we obtained in our analysis of nondegenerate rational curves of degree 11 lying on five linearly independent hypercubics, we deduce that the corresponding rational curves $C$ are nonproblematic.

\item $C$ is $9$-irregular. Then \cite[Thm.~3.1]{GLP} implies that $C$
  admits a $10$-secant line. Because rational curves $C$ of degree $11$ that span hyperplanes and admit
$10$-secant lines comprise a sublocus of $M^3_{11}$ of codimension at least $6$, by
Lemma~\ref{1a}, the sublocus of such curves has codimension
  at least 14 in $M_{11}$. On the other hand, using the connectedness theorem \cite[Thm. $2.5$]{AT}, we see immediately that $g+i<14$ and, whence, that $C$ is nonproblematic. 
\end{enumerate}
\end{proof}

\section{Reducible curves}\label{sec-four}
In this section we prove the following theorem, which extends
\cite[Thm 4.1]{Klei}.

\beg{thm}\label{reduciblecurves}
On a general quintic threefold in $\mathbb{P}^{4}$, there is no connected, reducible
and reduced curve of degree at most $11$ whose components are rational.
\end{thm}

\beg{proof}
Suppose, on the contrary, that such a curve $C$ exists. By the results
of \cite{Klei} and \cite{Cot} we may
assume $C$ has two components and is of degree $11$. Consider
one of them. By the result of \cite[Theorem 3.1]{Klei}, it is either smooth or a 
six-nodal plane quintic. If it is smooth, then, by
\cite[Cor. 2.5(3)]{Klei}, either it has degree 4 or more and spans $\mathbb{P}^{4}$ or it is a rational normal curve of
degree less than 4. We will prove that there can be
no such $C$.

To this end, we follow the proof of \cite[Thm 4.]{Klei}. Let $M_{a}^{'}$ denote the open subscheme of
the Hilbert scheme of $\mathbb{P}^{4}$ parameterizing the smooth irreducible curves of
degree $a$ that are rational normal curves if $a \leq 4$ and that span
$\mathbb{P}^{4}$ if $a \geq 4$. Denote the scheme parameterizing
six-nodal plane quintics in $\mathbb{P}^4$ by $N_{5}$. Let
$R_{a,b,n}$ and $S_{6,n}$ denote the subsets of
$M_{a}^{'} \times M_{b}^{'}$ (resp., $N_5 \times M_{6}^{'}$) of pairs $(A,B)$
such that $A \cap B$ has length $n$. Finally, let $I_{a,b,n}$ (resp., $J_n$) denote
the subset of $R_{a,b,n} \times \mathbb{P}^{125}$ (resp., $S_{6,n}
\times \mathbb{P}^{125}$) of triples
$(A,B,F)$ such that $A \cup B \subset F$. The $F$ that contain a
plane form a proper closed subset of $\mathbb{P}^{125}$; form its complement, and form
the preimages of this complement in $I_{a,b,n}$ (resp. $J_n$). Replace
$R_{a,b,n}$ and $S_{a,n}$ by the images of those preimages, and
replace $I_{a,b,n}$ and $J_n$ by the preimages of the new $R_{a,b,n}$
and $S_{6,n}$. Then given any pair
$(A,B)$ in $R_{a,b,n}$ or $S_{6,n}$, there is an $F$ that contains both $A$ and $B$, but
not any plane. It remains to
show that $I_{a,b,n}$ (resp. $J_n$)
has dimension at most 124 for $a+b =11$ and $n \geq 1$.

We note that the fiber of $I_{a,b,n}$ (resp., $J_n$) over a pair $(A,B)$ is a
projective space of dimension $h^{0}(\mathcal{I}_{C}(5))-1$, where $C$ is the reducible
curve $C=A \cup B$ and $\mathcal{I}_{C}$ is the ideal sheaf of the
corresponding subscheme of $\mathbb{P}^4$. Hence we have

\begin{equation}
\begin{split}
&\dim I_{a,b,n} \leq \dim R_{a,b,n} + 125 -
\min_{C}\{h^0(O_C(5))
- h^1(\mathcal{I}_C(5))\} \text{ and}\\
&\dim J_n \leq \dim S_{6,n}+125 -
\min_{C}\{h^0(O_C(5))
- h^1(\mathcal{I}_C(5))\}.
\notag
\end{split}
\end{equation}

Obviously, we have
\begin{equation}
h^{0}(\mc{O}_C(5)) \geq \chi(\mc{O}_C(5)), \notag
\end{equation}
which implies
\begin{equation}
\begin{split}
&h^0(\mc{O}_C(5)) \geq 5(a+b)+2-n = 57-n \text{ and} \\
&h^0(\mc{O}_C(5)) \geq
20+ 5(6) + 1-n=51-n, \text{ respectively.} \notag
\end{split}
\end{equation}

Our theorem is then a consequence of the following two lemmas.

\beg{lem}\label{lemma1}
For $a+b=11$ and $n \geq 1$,
\begin{subequations}
\begin{align}
&\dim R_{a,b,n} \leq 56-n \text{ and} \label{2.1a}\\
&\dim S_{a,n}
\leq 50-n. \label{2.1b}
\end{align}
\end{subequations}
\end{lem}

\beg{lem}\label{lemma2}
For $a+b=11$ and $n\geq 1$, 
\begin{equation}
h^1(\mathcal{I}_C(5)) = 0. \notag
\end{equation}
\end{lem}

\beg{proof}[Proof of Lemma~\ref{lemma1}] 
Our argument is nearly identical to the proof of \cite[Lem. $3.0.2$]{Cot}, so
we merely sketch it.

Let $(A,B)$ denote an arbitrary
pair in $R_{a,b,n}$ or $S_n$. Assuming $a\leq b$, we have $\deg A \leq
5$. As in \cite[Cor. $2.5$]{Klei}, we may assume that the restricted
tangent bundles $T_{\mb{P}^4}|_A= \oplus_{i=1}^4
\mc{O}_{\mb{P}^1}(a_i)$ and $T_{\mb{P}^4}|_B=  \oplus_{i=1}^4
\mc{O}_{\mb{P}^1}(b_i)$ have
generic splitting types, i.e., that the sum of the absolute
values of differences $\sum_{i \neq j} |a_i-a_j|$ and $\sum_{i \neq j}
|b_i-b_j|$, respectively, are minimized. As in \cite[proof of
  Lem. $3.0.2$]{Cot}, we will repeatedly use the Key Fact cited in Section~\ref{ci333}.
  
\begin{enumerate}
\item $\deg A=1$. The required bound on $\dim R_{a,b,n}$ holds on
  the basis of the arguments of \cite[Lemma 4.2]{Klei}.
\item $\deg A=2$. On the basis of the argument in \cite[Lemma
  4.2]{Klei}, we may assume $7 \leq n \leq 8$. To bound $\dim
  R_{2,9,7}$ (resp., $\dim R_{2,9,8}$), it suffices to show that
  rational nondegenerate nonics admitting $7$-secant conics $B$
  (resp., $8$-secant conics) have codimension at least $8$ (resp., $9$) inside
  the generic splitting stratum for restricted tangent bundles. This
  follows easily, however, from
\begin{equation*}
T_{\mathbb{P}^4}|_B \cong \mc{O}_{\mathbb{P}^1}(12)
  \oplus \mc{O}_{\mathbb{P}^1}(11)^{\oplus 3},
\end{equation*}
together with the key fact cited above.
\item $\deg A=3$. On the basis of the argument in \cite[Lemma
  4.2]{Klei}, it's enough to verify the required bound for $\dim
  R_{3,8,8}$. For that purpose, it's in turn enough to check that
  rational nondegenerate octics $B$ admitting 8-secant twisted cubics have
  codimension at least 9 inside the generic splitting stratum. This
  follows from the fact that
\begin{equation*}
T_{\mathbb{P}^4}|_B \cong \mc{O}_{\mathbb{P}^1}(10)^{\oplus 4}.
\end{equation*}
\item $\deg A=4$. On the basis of the argument in \cite[Lemma
  4.2]{Klei}, we may assume $12 \leq n \leq 14$. Now observe that
  $\dim R_{4,7,n} \leq \dim R_{4,7,n}$ for all $n>7$; moreover, since
\begin{equation*}
T_{\mathbb{P}^4}|_B \cong \mc{O}_{\mathbb{P}^1}(9)^{\oplus 3}
  \oplus \mc{O}_{\mathbb{P}^1}(8),
\end{equation*}
those $B$ intersecting $A$ in $8$ points cut out a sublocus of
codimension $16$ inside $M_7$, and we conclude no unions $A \cup B \in
R_{4,7,n}$ lie on a general quintic.
\item $\deg A=5$. We have
\begin{equation*}
T_{\mathbb{P}^4}|B \cong \mc{O}_{\mathbb{P}^1}(8)^{\oplus 2}
  \oplus \mc{O}_{\mathbb{P}^1}(7)^{\oplus 2}.
\end{equation*}
First say that $A$ is smooth. Then $A$ and $B$ are $3$-regular, by
\cite[Cor. $2.5$]{Klei}, so they
are cut out by hyperquadrics and hypercubics and, moreover,
$h^1(\mc{I}_A(2))=0$. Whence,
\begin{equation*}
h^0(\mc{I}_A(2))=h^0(\mc{O}_{\mathbb{P}^4}(2))-h^0(\mc{O}_A(2))=15-(5(2)+1)=4,
\end{equation*}
by the Riemann--Roch theorem.
Observe that there is some hyperquadric containing $A$ but not $B$. To see
this, note first that by Bezout's theorem, no three linearly
independent hyperquadrics $Q_1, Q_2, Q_3$
containing $A$ may cut out a complete intersection curve containing $A
\cup B$. On the other hand, if $A \subset Q_1 \cap Q_2 \cap Q_3$ and
$Q_1, Q_2, \text{ and }Q_3$ do not cut out a complete intersection,
then $A$ lies on a cubic scroll cut out
by three hyperquadrics. The intersection of the scroll with any fourth
linearly independent hyperquadric is a sextic curve, which cannot
contain $A \cup B$. So we deduce once more that
there is some hyperquadric $Q$ containing $A$ but not $B$.

Thus $\deg A \cap B \leq \deg B \cap Q=12$. On the other hand, $T_{\mathbb{P}^4}|_A$ has balanced splitting
type $(7,6,6,6)$, from which it follows (by the same argument used
earlier) that for all $1 \leq n \leq
7$, the codimension of curves $A$ intersecting curves $B$ in
subschemes of length $n$ is equal to $2n$. As $\dim R_{5,6,m}
\leq \dim R_{5,6,n}$ whenever $m \geq n$, it
follows immediately that no curve in any $R_{5,6,n}, n\geq 1$ lies on
a general quintic hypersurface.

\item Finally, say $A$ is a six-nodal plane quintic and $B$ a smooth
sextic. Letting $J$ denote
the plane of $A$, it's clear that $A \cap B \subset J \cap B$, so $A
\cap B$ has degree at most 6. On the other hand, the restricted
tangent bundle $T_{\mathbb{P}^2}|_A$ has splitting type $(a_1,a_2)$,
where $a_1+a_2=15$. Assume $a_1 \geq a_2$. As usual, our goal is to
bound the codimension of those six-nodal plane quintics that meet
other six-nodal plane quintics in at most $5$ points by exploiting the
generic splitting of the restricted tangent bundle of $A$. In
particular, we are done provided $a_2 \geq 3$, which is certainly the
case. Indeed, pulling back the (dual of the) Euler sequence on
$\mb{P}^4$ to $A \cong \mb{P}^1$ and twisting by
$\mc{O}_{\mb{P}^1}(5)$ yields an exact sequence
\beg{equation*}
0 \ra \oplus_{i=1}^2 \mc{O}_{\mb{P}^1}(-a_i+5) \ra
V \otimes \mc{O}_{\mb{P}^1} \ra
\mc{O}_{\mb{P}^1}(5) \ra 0
\end{equation*}
where $V \sub H^0(\mc{O}_{\mb{P}^1}(5))$, which implies that $a_i-5 \geq 0$.
The proof of the first lemma is now complete.  
\end{enumerate}
\end{proof}

\beg{proof}[Proof of Lemma~\ref{lemma2}]
We now proceed to the proof of the second lemma, i.e., that all curves
$C$ in rational components $A$ and $B$ satisfying our hypotheses are
$6$-regular. By a result of G. Caviglia \cite[Thm. 2.1]{Gi}, we have
\begin{equation*}
\mbox{reg}(A \cup B) \leq \mbox{reg}(A)+ \mbox{reg}(B),
\end{equation*}
so in particular we are done whenever
\begin{equation*}
\mbox{reg}(A)+\mbox{reg}(B) \leq 6.
\end{equation*}
For $a \geq 2$, this follows immediately from the result of [Prop
  2.2]\cite{Klei2}. For $a=1$, we note that {\it a generic rational
  curve $B$ of degree $10$ in $\mb{P}^4$ is $4$-regular}, as one can
prove using Macaulay2 (see Section~\ref{sec-five}). But no preimage of a proper closed subset of $M_{10}$ dominates the space of quintics $F$, so we may assume without loss of generality that $B$ is $5$-regular. Because the line $A$ is $1$-regular, it follows once more that any reducible union $A \cup B \subset F$ is $5$-regular, and the lemma follows. 
\end{proof}
Theorem~\ref{reduciblecurves} follows immediately from Lemmas~\ref{lemma1} and \ref{lemma2}.
\end{proof}

\section{Auxiliary results}\label{sec-five}

In the preceding sections, we make use of the following result, which extends \cite[Lem. 3.4a]{Klei} when $d=11$. Complete proofs may be found in the earlier preprint version of this paper \cite{Cot2}.

\beg{thm}\label{codlemma}
The codimension of $M^4_{11,g}$ in $M_{11}$ is at least $\min(2g,12)$.
\end{thm}

\beg{proof} (Sketch.)
For any $f \in M^4_{11,g}$, $g$ is equal to the sum of the delta-invariants of the singularities of the image of $f$. Since an ordinary node imposes 2 conditions and has delta-invariant 1, we {\it expect} $M^4_{11,g}$ to have codimension at least $2g$ in $M^4_{11,g}$ when $g$ is small. This expectation turns out to be correct, and follows from two technical lemmas, each of which may be verified via a case-by-case analysis in local coordinates. 
\end{proof}

\beg{lem}Assume that $f \in M^4_{11,\de}$ has a unibranch singularity with $\de$-invariant $\de \le 6$. Deformations that preserve the ramification type of the singularity, together with its location and the location of its preimage determine a sublocus of $M_{11}$ of the expected codimension.
\end{lem}

\beg{lem}Assume that $f \in M^4_{11,\de}$ has a singularity, all of whose branches are smooth, with $\de$-invariant $\de \le 6$. Deformations that preserve the singularity, together with its location and the locations of its preimages determine a sublocus of $M_{11}$ of codimension at least $\de$.
\end{lem}

Finally, we prove the following results, which were invoked in the course of the proof of Theorem ~\ref{1.1bis}.

\beg{claim}\label{aux1}
A generic rational curve of degree 10 in $\mb{P}^4$ is 4-regular.
\end{claim}
\beg{claim}\label{aux2}
A generic rational curve of degree 11 in $\mb{P}^4$ of RTB splitting type $(4,3,2,2)$ is 4-regular.
\end{claim}
\beg{claim}\label{aux3}
A generic rational curve of degree 11 in $\mb{P}^4$ of RTB splitting type $(4,4,2,1)$ is 6-regular.
\end{claim}

\beg{proof}[Proof of Claim~\ref{aux1}.]
By upper-semicontinuity of cohomology, it suffices to produce a single degree-10 morphism $f: \mb{P}^1 \ra \mb{P}^4$ with the desired property. We accomplish this with the following Macaulay2 \cite{GS} program.
{\fl \url $kk=ZZ/32003$ \\
$\mbox{ringP1}=kk[t,u]$ \\
$\mbox{ringP4}=kk[x0,x1,x2,x3,x4]$ \\
$g0=t^{10}+t^9*u+t*u^9+u^{10}$ \\
$g1=7*t^{10}+101*t^8*u^2+ 355*t^5*u^5+ 999*u^{10}$ \\
$g2=29*t^{10}+ 99*t^3*u^7+ 67*t^2*u^8+ 83*u^{10}$ \\
$g3=61*t^{10}+79*t^5*u^5+t^3*u^7+901*t*u^9+ 53*u^{10}$ \\
$g4=741*t^{10}+t^8*u^2+ t^7*u^3+ t^6*u^4+ t^4*u^6+ t^2*u^8+ 9001*u^{10}$ \\
$g=\mbox{map}(\mbox{ringP1},\mbox{ringP4},{g0,g1,g2,g3,g4})$ \\
$J=\ker g$ \\
$\mbox{leadTerm}(J)$
}

The resulting monomial ideal (i.e., the revlex initial ideal of the image of the map $\mb{P}^1 \st{(g_0,\dots,g_4)}{\longrightarrow} \mb{P}^4$) is minimally generated in degrees at most 4, which implies the first assertion.
\end{proof}

Similarly, to check the second and third assertions, first note that given any degree-$d$ morphism $f: \mb{P}^1 \ra \mb{P}^4$, the $(-1)$-twist of the dual of the Euler sequence on $\mb{P}^4$ pulls back via $f$ to
\[
0 \ra f^*(\Om_{\mb{P}^4}(1)) \cong \oplus_{i=1}^4 \mc{O}(-a_i) \ra V \otimes \mc{O} \ra \mc{O}(d) \ra 0.
\]
where $V \sub H^0(\mc{O}_{\mb{P}^1}(d))$ is the five-dimensional space of sections defining $f$. In other words, the dual of $f^*(T_{\mb{P}^4}(-1))$ is the syzygy sheaf of $V$. Accordingly, to check the second and third assertions, we choose {\it syzygies} associated with splitting types $(4,3,2,2)$ and $(4,4,2,1)$ at random, and check that the corresponding vector spaces $V$ define morphisms to $\mb{P}^4$ whose images have the desired properties.

\beg{proof}[Proof of Claim~\ref{aux2}.]
By upper-semicontinuity, it suffices to construct a single degree-11 rational curve that is 4-regular and of the desired splitting type. To this end, begin by choosing sections $f_0, \dots, f_4 \in H^0(\mc{O}_{\mb{P}^1}(11))$ such that
\beg{equation}\label{syzeqns}
\beg{split}
&t^4 f_0 + t^3u f_1+ t^2u^2 f_2+ tu^3 f_3+ u^4 f_5=0, \\
&u^3f_0+ u^2t f_1+ ut^2 f_3+ t^3 f_4=0, \\
&ut f_1+ (u^2-t^2) f_2+ t^2 f_3+ u^2 f_4=0, \text{ and }\\
&t^2 f_0+ (t^2+tu+u^2) f_1+ tu f_2+ u^2 f_3+ (t^2-u^2) f_4=0.
\end{split}
\end{equation}

Now write
\[
f_i= \sum_{j=0}^{11} a_{i,j} t^i u^{11-j}
\]
where $t$ and $u$ are homogeneous coordinates on $\mb{P}^1$. The equations \eqref{syzeqns} impose 59 linearly independent conditions on the coefficients $a_{i,j}$. In fact, up a constant multiple, they force
\[
\beg{split}
&f_0= -t^9u^2- t^8u^3+ t^6u^5+ 2t^5u^6- 2t^3u^8+ tu^{10}, \\
&f_1=-t^8u^3+ 2t^7u^4- 3t^6u^5- t^5u^6+ 2t^4u^7+ 3t^3u^8+ t^2u^9- tu^{10}- u^{11}, \\
& f2= t^{11}+t^9u^2- 2t^8u^3+ 2t^7u^4- t^6u^5- 3t^4u^7- t^3u^8+ t^2u^9+ 2tu^{10}, \\
&f_3= t^{11}- t^8u^3+ 2t^7u^4- t^6u^5- t^4u^7- 3t^3u^8- 2t^2u^9+ tu^{10}+ u^{11}, \text{ and }\\
&f_4= -t^{10}u+ t^7u^4+ 3t^4u^7+ t^3u^8- t^2u^9- tu^{10}.
\end{split}
\]

Using Macaulay2, we find that the revlex initial ideal of the image of the map $f:\mb{P}^1 \st{(f_0,\dots,f_4)}{\longrightarrow} \mb{P}^4$ is $\mbox{Borel}(x_2^4,x_1x_2^2x_3,x_0^3)$, which is generated in degrees at most 4. We also find that the betti diagram of the minimal resolution of the ideal $I=(f_0, \dots, f_4)$ is given by
\[
\beg{split}
\mbox{total}:  \hspace{5pt} &1 \hspace{5pt} 5 \hspace{5pt} 4 \\
          0: \hspace{5pt} &1  \hspace{5pt}.  \hspace{5pt} . \\
          1: \hspace{5pt} &.  \hspace{5pt}.  \hspace{5pt} . \\
         & \vdots \\
         10:  \hspace{5pt} &.  \hspace{5pt} 5  \hspace{5pt} . \\
         11:  \hspace{5pt} &.  \hspace{5pt}.  \hspace{5pt} 2 \\
         12:  \hspace{5pt} &.  \hspace{5pt}.  \hspace{5pt} 1 \\
         13:  \hspace{5pt} &.  \hspace{5pt}.  \hspace{5pt} 1
\end{split}
\]
Therefore, $f$ has RTB splitting type $(4,3,2,2)$, and Claim~\ref{aux2} follows.
\end{proof}

\beg{proof}[Proof of Claim~\ref{aux3}]
Much as in the proof of Claim~\ref{aux2}, we begin by choosing sections $f_0, \dots, f_4 \in H^0(\mc{O}_{\mb{P}^1}(11))$ that verify
\beg{equation}\label{syzeqns2}
\beg{split}
&t^4f_0+t^3uf_1+t^2u^2 f_2+tu^3f_3+u^4f_4=0, \\
&u^4f_0+u^3tf_1+(t^4+u^2t^2)f_2+ut^3f_3+t^4f_4=0, \\
&utf_1+(u^2-t^2)f_2+t^2f_3+u^2f_4=0, \text{ and }\\
&tf_0+(t+u)f_2+(t-u)f_3+uf_4=0.
\end{split}
\end{equation}

Up to a constant multiple, the latter relations force
\[
\beg{split}
&f_0=2t^{10}u-t^9u^2-4t^7u^4-t^6u^5+2t^5u^6-t^4u^7+3t^3u^8+t^2u^9-tu^{10}, \\
&f_1=-2t^{11}+t^{10}u+2t^9u^2+3t^8u^3-2t^7u^4-t^6u^5-t^5u^6+t^4u^7+2t^3u^8-2t^2u^9-tu^{10}+u^{11}, \\
&f_2=-2t^{10}u+t^9u^2+t^8u^3+t^7u^4+t^6u^5-t^5u^6-tu^{10}, \\
&f_3=-t^8u^3-2t^7u^4-t^6u^5+t^5u^6+4t^4u^7+t^3u^8-2t^2u^9-tu^{10}, \text{ and }\\
&f_4=2t^10u-t^9u^2+3t^8u^3-2t^7u^4-4t^6u^5-t^5u^6+2t^3u^8+t^2u^9.
\end{split}
\]

Using Macaulay2, we find that the revlex initial ideal of the image of the map $f:\mb{P}^1 \st{(f_0,\dots,f_4)}{\longrightarrow} \mb{P}^4$ is 
\[
\beg{split}
I=&(x_0^2x_2,x_0^3,x_0x_1^2x_2,x_0^2x_3^3,x_2^3x_3,x_1x_2^2x_3,x_0x_2^2x_3,x_1^2x_2x_3,x_0x_1x_2x_3,x_0x_1^2x_3, \\
&x_0^2x_1x_3,x_2^4,x_1x_2^3,x_0x_2^3,x_1^2x_2^2,x_0x_1x_2^2,x_1^3x_2,x_0x_1^3,x_0^2x_1^2,x_0x_1x_3^3,x_1^3x_3^3),
\end{split}
\]
which is generated in degrees at most 6. Note, curiously, that $I$ is {\it not} Borel-fixed, so it is not the revlex gin of the image of $f$. Moreover, a calculation yields that the genus of $\mbox{Im}(f)$ is equal to 1, so the curve is singular! These facts are, however, of no concern to us.

Finally, the betti diagram of the minimal resolution of the ideal $I=(f_0, \dots, f_4)$ is given by
\[
\beg{split}
\mbox{total}:  \hspace{5pt} &1 \hspace{5pt} 5 \hspace{5pt} 4 \\
          0: \hspace{5pt} &1  \hspace{5pt}.  \hspace{5pt} . \\
          1: \hspace{5pt} &.  \hspace{5pt}.  \hspace{5pt} . \\
         & \vdots \\
         10:  \hspace{5pt} &.  \hspace{5pt} 5  \hspace{5pt} 1 \\
         11:  \hspace{5pt} &.  \hspace{5pt}.  \hspace{5pt} 1 \\
         12:  \hspace{5pt} &.  \hspace{5pt}.  \hspace{5pt} . \\
         13:  \hspace{5pt} &.  \hspace{5pt}.  \hspace{5pt} 2
\end{split}
\]
Whence, $f$ has RTB splitting type $(4,4,2,1)$, and Claim~\ref{aux3} follows.
\end{proof}

{\fl \bf \small Laboratoire de Math\'ematiques Jean Leray \\
Unit\'e mixte de recherche 6629 du CNRS\\
Universit\'e de Nantes\\
2 rue de la Houssini\`ere, BP 92208 \\
44322 Nantes Cedex 3, FRANCE \\

{\it Email address}: \url{cotteril@math.harvard.edu} \\
{\it Web page}: \url{www.mast.queensu.ca/~cotteril}}

\end{document}